\documentclass[11pt,twoside]{amsart}
\usepackage{float}
\usepackage[utf8]{inputenc}
\usepackage[english]{babel}
\usepackage[T1]{fontenc}
\usepackage{mathtools}
\usepackage{enumitem}
\usepackage{diagbox}
\usepackage{array}

\usepackage{tabularx}
\usepackage{graphicx}
\usepackage{tikz}
\usepackage[all]{xy}
\usepackage{tcolorbox}
\usepackage{amsmath,
    amsfonts,
    amssymb,
    mathrsfs,
    stackrel}
\usepackage{comment}
\usepackage{bookmark}
\usepackage{hyperref}
\usepackage{cleveref}
\hypersetup{colorlinks,
    citecolor=blue,
    linktocpage}

\textwidth=450pt 
\oddsidemargin=12pt
\evensidemargin=12pt

\setlength{\footskip}{25pt}

\DeclareFontFamily{U}{wncy}{}
\DeclareFontShape{U}{wncy}{m}{n}{<->wncyr10}{}
\DeclareSymbolFont{mcy}{U}{wncy}{m}{n}
\DeclareMathSymbol{\Sha}{\mathord}{mcy}{"58}

\newcommand\cC{{\mathcal C}}

\newcommand\cE{{\mathcal E}}
\newcommand\cF{{\mathcal F}}

\newcommand\cH{{\mathcal H}}
\newcommand\cI{{\mathcal I}}

\newcommand\cL{{\mathcal L}}
\newcommand\cM{{\mathcal M}}

\newcommand\cO{{\mathcal O}}

\newcommand\cT{{\mathcal T}}

\newcommand\fU{{\mathfrak U}}

\newcommand\bC{{\mathbb C}}
\newcommand\bD{{\mathbb D}}

\newcommand\bG{{\mathbb G}}

\newcommand\bN{{\mathbb N}}

\newcommand\bP{{\mathbb P}}
\newcommand\bQ{{\mathbb Q}}
\newcommand\bR{{\mathbb R}}
\newcommand\bS{{\mathbb S}}
\newcommand\bT{{\mathbb T}}

\newcommand\bV{{\mathbb V}}

\newcommand\bZ{{\mathbb Z}}

\newcommand\hB{{\widehat B}}
\newcommand\hC{{\widehat C}}

\newcommand\hX{{\widehat X}}

\newcommand\wD{{\widetilde D}}

\newcommand\wV{{\widetilde V}}

\newcommand\wX{{\widetilde X}}
\newcommand\wY{{\widetilde Y}}

\newcommand\oD{{\overline D}}

\newcommand\oV{{\overline V}}

\DeclareMathOperator{\SL}{SL}

\newtheorem{theorem}{Theorem}[section]

\newtheorem{corollary}[theorem]{Corollary}
\newtheorem{lemma}[theorem]{Lemma}

\newtheorem{proposition}[theorem]{Proposition}

\theoremstyle{definition}

\newtheorem{definition}[theorem]{Definition}
\newtheorem{example}[theorem]{Example}

\newtheorem{notation}[theorem]{Notation}

\newtheorem{observation}[theorem]{Observation}

\newtheorem{remark}[theorem]{Remark}
\newtheorem{exercise}[theorem]{Exercise}

\newcommand{\twopartdef}[4]
{
	\left\{
		\begin{array}{ll}
			#1 & \mbox{if } #2 \\
			#3 & \mbox{if } #4
		\end{array}
	\right.
}

\newcommand{\threepartdef}[6]
{
	\left\{
		\begin{array}{lll}
			#1 & \mbox{if } #2 \\
			#3 & \mbox{if } #4 \\
			#5 & \mbox{if } #6
		\end{array}
	\right.
}

\newcommand{\threepartdefwhen}[6]
{
	\left\{
		\begin{array}{lll}
			#1 & \mbox{when } #2 \\
			#3 & \mbox{when } #4 \\
			#5 & \mbox{when } #6
		\end{array}
	\right.
}

\title[Polync varieties]{Polync varieties 
and multiparameter Kulikov models}
\author[Engel]{Philip Engel}
\address{Department of Mathematics, Statistics, 
and Computer Science, University of Illinois in Chicago (UIC),
851 S Morgan St, Chicago, IL 60607, USA}
\email{pengel@uic.edu}

\begin{document}

\begin{abstract} 
We study 
``polync varieties'', whose singularities
are locally products of normal
crossing (nc) singularities.
We introduce the notion of $d$-semistability
of such varieties, 
and generalize work of Friedman and
Kawamata--Namikawa to address the
smoothability of $d$-semistable,
$K$-trivial, polync
varieties. These results are applications
of recent breakthroughs on the logarithmic
Bogomolov--Tian--Todorov theorem,
due to Chan--Leung--Ma
and Felten--Filip--Ruddat.
We generalize the combinatorial description of Kulikov models 
for K3 surfaces to the setting of a multiparameter base 
and describe some interesting examples.
\end{abstract}

\setcounter{tocdepth}{2}
\maketitle 

\tableofcontents

\section{Polync varieties}

\begin{definition} A {\it polync variety},
or {\it polync analytic space}, is an algebraic variety, resp.~analytic space, 
that is \'etale, resp.~analytically, locally
isomorphic to a product
\begin{align}\label{product}
\prod_{i\in I} \,\{x_i^{(1)}\cdots x_i^{(m_i)}=0\}
\end{align}
 of
normal crossing (nc) singularities, 
$m_i\geq 2$, and a smooth
space $\bC^m$. We
say that a polync variety/analytic space $X_0$
is {\it polysnc} if the irreducible
components of $X_0$ are smooth.
\end{definition}

Such singularities
have also been
called ``generalized semi-stable'',
see \cite[Sec.~0]{li}.


\begin{example}
Any nc variety $X_0$ is polync; we
need only $|I|=1$ factor in the local
equation (\ref{product}), 
at any point $x\in X_0$.
Any product of nc varieties is polync.
\end{example}

\begin{definition}
A {\it polysimplex} is a polyhedron,
which is a product of simplices
$P_I\coloneqq \prod_{i\in I} \sigma_{m_i-1}$
for $m_i\geq 2$ (for $P_I$ a point,
we take the empty product). 
It has real dimension 
$\sum_{i\in I} (m_i-1)$.
\end{definition}

\begin{example}
Any simplex is a polysimplex with only one factor. 
A cube in any dimension is a polysimplex, 
being a product of intervals, i.e.~$1$-simplices.
A triangular prism is a polysimplex,
as the product of a $1$-simplex and a 
$2$-simplex. See Figure \ref{polysim}.
\end{example}

Let $X_0$ be polync.
A {\it closed stratum} 
$X_\sigma\subset X_0$ 
is
an irreducible component of 
an intersection of 
components of $X_0$.
The corresponding {\it open stratum} 
$(X_\sigma)^\circ\subset X_\sigma$ is
the complement
of all lower-dimensional closed strata
$X_\tau\subsetneq X_\sigma$. 
Along
$(X_\sigma)^\circ$, we
have a local product
form (\ref{product}).
We denote by $I=I(\sigma)$
the indexing set for
the local nc factors
appearing in this product.

\begin{figure}
    \centering
    \includegraphics[width=2.7in]{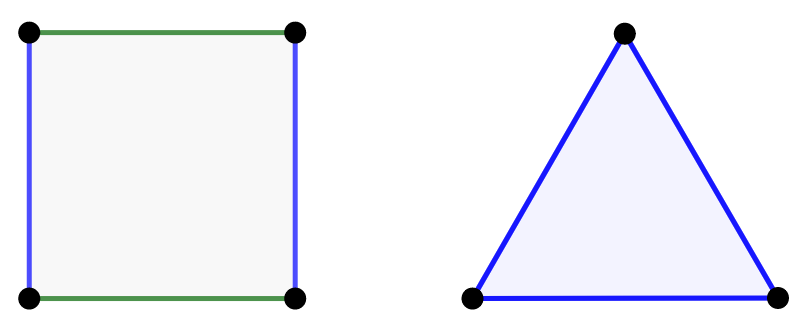}
    \hspace{20pt}
    \includegraphics[width=1.1in]{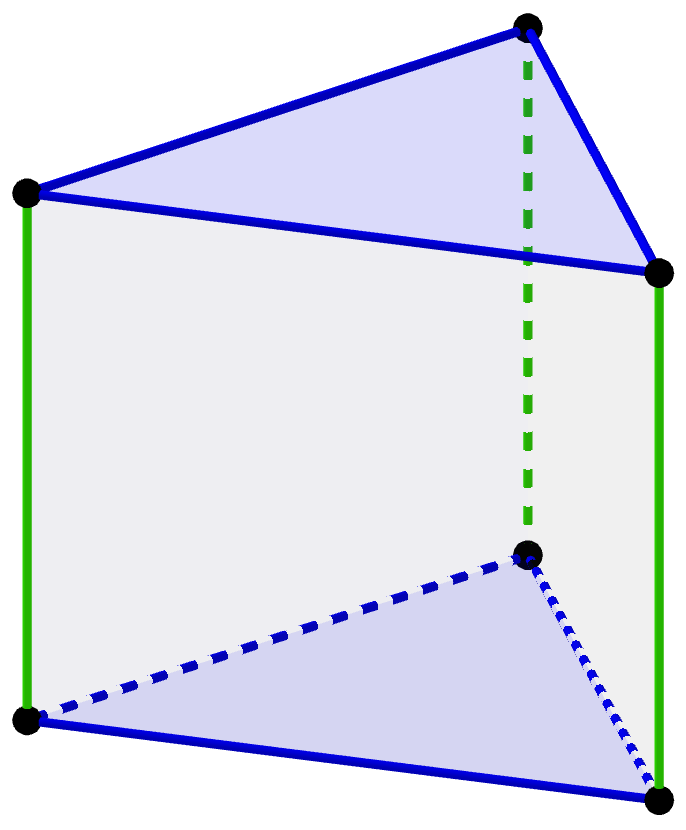}
    \caption{Two $2$-dimensional polysimplices and one $3$-dimensional
    polysimplex}
    \label{polysim}
\end{figure}

\begin{figure}
    \centering
    \includegraphics[width=3.8in]{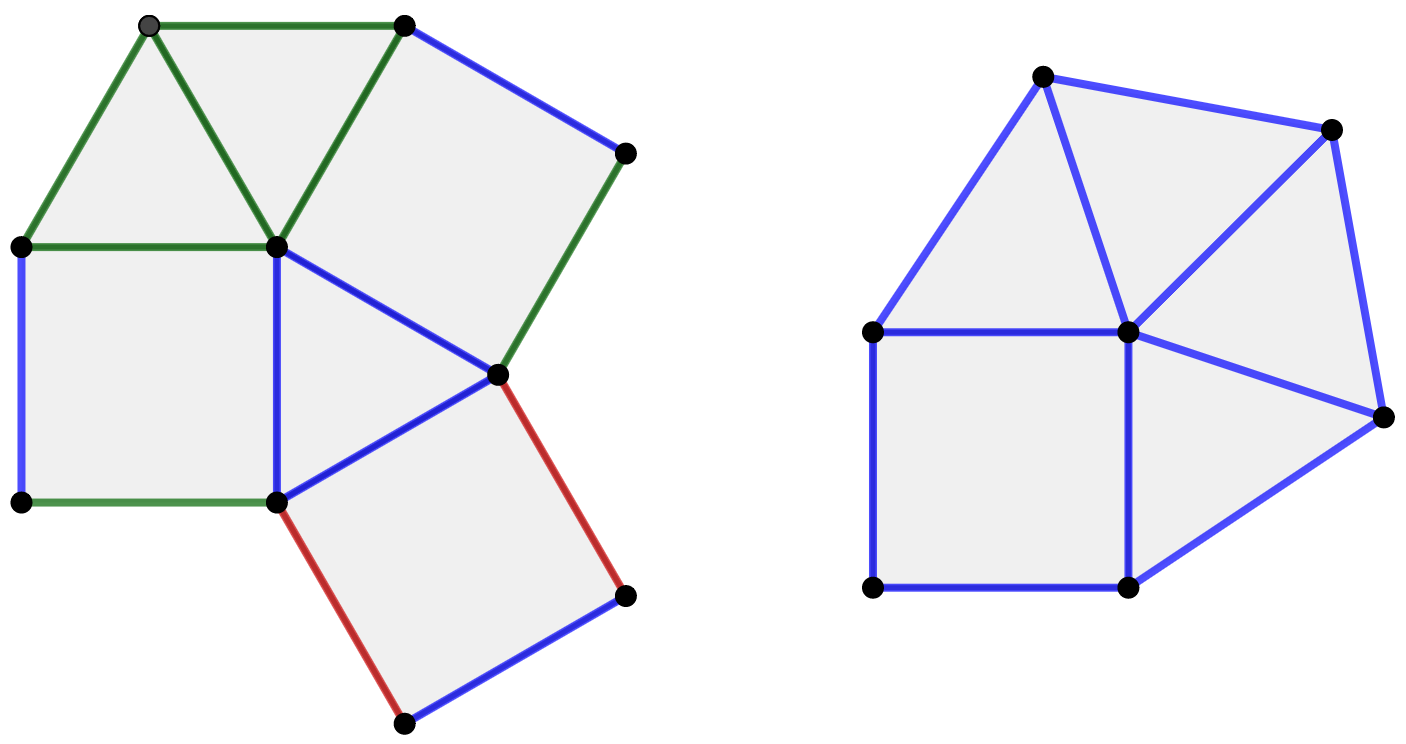}
    \caption{Left:~Colorable polysimplicial
    complex of dimension $2$, colored by the 
    set $S = \{{\rm blue},\,{\rm green},\,{\rm red}\}$. Right:~$2$-dimensional polysimplicial complex
    that admits no coloring; the chain
    of $2$-simplices forces the two $1$-simplex factors of the square to be colored
    by the same element of $S$.
    }
    \label{polysim2}
\end{figure}

\begin{definition} The {\it dual complex}
$\Gamma(X_0)$
of a polync variety is a polyhedral complex 
defined as follows: For each open
singular stratum $(X_\sigma)^\circ\subset X_0$ along 
which $X_0$ has the local form 
given by
(\ref{product}), we take a polysimplex
$P_\sigma \coloneqq \prod_{i\in I(\sigma)} \sigma_{m_i-1}.$
If we have an inclusion
$X_\tau\subset X_\sigma$ then $P_\tau$ naturally contains
$P_\sigma$ as a face. Then,
we glue along the poset
of singular strata $\{X_\sigma\}$
of $X_0$ to get a polyhedral complex
$$\textstyle \Gamma(X_0)=\bigcup_\sigma P_\sigma.$$
\end{definition}

\begin{definition}\label{colorable}
A polyhedral complex 
$\Gamma = \bigcup_\sigma P_\sigma$
whose cells are polysimplices,
is a {\it polysimplicial complex}. 
A polysimplicial complex 
is {\it colorable} if there exists
a finite set $S$, together with
a collection of injective
maps $\phi_\sigma\,\colon I(\sigma) \to S$, one for 
each polysimplex $P_\sigma\subset \Gamma$,
such that the functions $\phi_\sigma$
are compatible along inclusions
of faces. That is, $\phi_\tau\vert_{I(\sigma)}=
\phi_\sigma$  for any face $P_\sigma\subset P_\tau$. 
We call $\{\phi_\sigma\}$ 
an {\it $S$-coloring}
of  $\Gamma$.
\end{definition}

That is, there is a coloring
of each positive-dimensional 
simplex factor of 
$P_\sigma$ by \textbf{distinct} 
elements of $S$. Furthermore,
these colorings
are compatible with inclusions 
$P_\sigma\subset P_\tau$ of polysimplices.
See Figures \ref{polysim2} and \ref{fig:rhomb}.
In particular, the $1$-skeleton
$\Gamma^{[1]}\subset \Gamma$ is an
$S$-colored graph, as any $1$-dimensional
polysimplex is a $1$-simplex, and so 
has only one simplex factor; $I=\{i\}$.

\begin{notation} We write 
$\phi_{\sigma}(i) = s(i)\in S$ for the color
of the $i$th simplex
factor of $P_\sigma$.\end{notation}

\begin{example}
Any simplicial complex
is colorable by a single
color $S = \{s\}$.
It is also sometimes possible 
to use multiple colors. 
For example, a $1$-dimensional 
simplicial complex is a graph $G$. One
may take $S = E(G)$ to be the edges, 
assigning each edge a different color.
\end{example}

\begin{definition} 
We say that a polync variety
$X_0$ is {\it colorable} if
its dual polysimplicial complex
$\Gamma(X_0)$ is colorable;
an {\it $S$-coloring} of $X_0$ 
is an $S$-coloring of $\Gamma(X_0)$.
\end{definition}


\begin{definition}
Suppose $X_0$ is an $S$-colored, polync variety.
The {\it color $s\in S$ singular locus} 
$(X_0)_{\rm sing}^s\subset (X_0)_{\rm sing}$
is the union of all strata $X_\sigma$ for which
$s$ appears as a color of a simplex
factor of $P_\sigma\subset \Gamma(X_0)$, 
or equivalently, $s\in {\rm im}(\phi_{\sigma})$.
\end{definition}

\begin{remark}
$(X_0)_{\rm sing}^s\subset X_0$ 
is a divisor
because 
every stratum $X_\sigma$ involving
$s\in S$ is contained in a codimension
$1$ stratum of color $s$, 
associated to any edge
of the polysimplex $P_\sigma$
 of color $s$.
We have a decomposition
$(X_0)_{\rm sing} = \bigcup_{s\in S}
(X_0)_{\rm sing}^s$ and
the intersection
$(X_0)_{\rm sing}^s\cap (X_0)_{\rm sing}^{s'}$ 
for $s\neq s'\in S$
contains no divisors,
corresponding to the fact 
that the graph $\Gamma(X_0)^{[1]}$ is $S$-colored.
\end{remark}

See Figure \ref{fig:rhomb-dual} for a 
polync surface and the color components
of its singular locus.

\begin{proposition}\label{prop:1st}
Let $X_0$ be an $S$-colored, 
polync variety.
The sheaf 
$\cE xt^1(\Omega^1_{X_0},\cO_{X_0})\simeq \textstyle \bigoplus_{s\in S} \cL_s$
is a direct sum of line bundles 
$\cL_s$ supported on 
$(X_0)_{\rm sing}^s$.
\end{proposition}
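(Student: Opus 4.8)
The plan is to compute the sheaf $\cT^1_{X_0}\coloneqq\cE xt^1(\Omega^1_{X_0},\cO_{X_0})$ on the étale (resp.~analytic) local product charts of the form \eqref{product}, and then to assemble the resulting local summands into a global direct sum indexed by $S$ by means of the coloring. Since $\cE xt$-sheaves commute with flat (in particular étale) base change and with analytic localization, it suffices to identify $\cT^1_{X_0}$ on one such chart and to check that these identifications glue. The two computational inputs are a Künneth-type formula for $\cE xt^1(\Omega^1,\cO)$ along a product and the classical normal-crossings computation of $\cT^1$ for a single factor.

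First I would establish the product formula. For a product $X_0=A\times B$ of (possibly singular) varieties, with projections $p,q$, one has $\Omega^1_{A\times B}=p^*\Omega^1_A\oplus q^*\Omega^1_B$. Because $p$ is flat, a locally free resolution $P_\bullet\to\Omega^1_A$ pulls back to a locally free resolution $p^*P_\bullet\to p^*\Omega^1_A$ with $\cH om(p^*P_\bullet,\cO_{A\times B})=p^*\cH om(P_\bullet,\cO_A)$; taking cohomology and using flatness yields $\cE xt^n(p^*\Omega^1_A,\cO_{A\times B})=p^*\cE xt^n_A(\Omega^1_A,\cO_A)$, and similarly for $q$. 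As each summand of $\Omega^1_{A\times B}$ is a pullback there are no cross terms, so $\cT^1_{A\times B}=p^*\cT^1_A\oplus q^*\cT^1_B$. Iterating over the factors in \eqref{product}, and using $\cT^1=0$ on the smooth factor $\bC^m$, gives $\cT^1_{X_0}=\bigoplus_{i\in I}\pi_i^*\cT^1_{N_i}$ on the chart, where $N_i=\{x_i^{(1)}\cdots x_i^{(m_i)}=0\}$ and $\pi_i$ is the $i$th projection. For a single nc factor $N=\{x^{(1)}\cdots x^{(m)}=0\}$ I would then invoke the Tjurina description of $\cT^1$ of a hypersurface: with $f=x^{(1)}\cdots x^{(m)}$ one has $\cT^1_N=\cO_N/(\partial_l f:l)=\cO_N/(\widehat{x^{(l)}}:l)$, where $\widehat{x^{(l)}}=\prod_{j\neq l}x^{(j)}$. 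A direct check (the classical normal-crossings computation) shows this ideal is the ideal of the singular locus, so $\cT^1_N\cong\cO_{N_{\rm sing}}$ is invertible on $N_{\rm sing}$ and, in particular, pure of codimension one without embedded primes. Hence $\pi_i^*\cT^1_{N_i}\cong\cO_{Z_i}$, where $Z_i=\pi_i^{-1}((N_i)_{\rm sing})$ is the locus on the chart where at least two branches of the $i$th factor vanish; by definition this is the trace of $(X_0)^{s(i)}_{\rm sing}$, with $s(i)\in S$ the color of the $i$th factor. Thus on each chart $\cT^1_{X_0}=\bigoplus_i\cO_{Z_i}$ is a sum of invertible sheaves, one on each divisor $(X_0)^s_{\rm sing}$ meeting the chart.

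Finally I would globalize. Write $D_s\coloneqq(X_0)^s_{\rm sing}$. These divisors are globally well defined precisely because the $S$-coloring is compatible along face inclusions ($\phi_\tau|_{I(\sigma)}=\phi_\sigma$), and by the Remark they meet pairwise in codimension $\geq 2$. Since each local summand $\cO_{Z_i}=\cO_{D_{s(i)}}$ has no embedded primes, any section of it supported on the lower-dimensional set $Z_i\cap\bigcup_{s'\neq s}D_{s'}$ vanishes; consequently, on every chart the subsheaf of sections of $\cT^1_{X_0}$ supported on $\bigcup_{s'\neq s}D_{s'}$ is exactly $\bigoplus_{i:\,s(i)\neq s}\cO_{Z_i}$. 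Therefore the quotient $\cL_s\coloneqq\cT^1_{X_0}\big/\{\text{sections supported on }\bigcup_{s'\neq s}D_{s'}\}$ is a globally defined sheaf restricting to $\cO_{Z_i}$ on each chart, hence invertible on $D_s$; the injectivity of the coloring maps guarantees that the factors of a chart carry distinct colors, so these local identifications agree on overlaps and summing over $s\in S$ gives $\cT^1_{X_0}=\bigoplus_{s\in S}\cL_s$.

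The step I expect to be the main obstacle is exactly this last canonicity and gluing: the local product presentation \eqref{product} is far from unique, so one must verify that the splitting of $\cT^1_{X_0}$ into color-components is independent of it. This is where colorability is indispensable—without a globally consistent labeling of the nc factors, the transition maps between product charts can permute the local summands (as for the non-colorable complex on the right of Figure \ref{polysim2}), and no $S$-indexed global decomposition can exist. The remaining ingredients—flatness of the projections, the hypersurface Tjurina computation, and the fact that each $(X_0)^s_{\rm sing}$ is a divisor—are routine or already recorded in the text.
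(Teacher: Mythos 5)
Your proof is correct and follows essentially the same route as the paper: a local K\"unneth-type computation identifying $\cE xt^1(\Omega^1_U,\cO_U)\simeq\bigoplus_{i\in I}\cO_{(U_i)_{\rm sing}}$ on product charts (where the paper cites the classical fact that first-order deformations of nc singularities are $1$-dimensional, you instead invoke the Tjurina-algebra computation $(\widehat{x^{(l)}}:l)=\bigcap_{j<k}(x^{(j)},x^{(k)})$), followed by globalization via the $S$-coloring. Your intrinsic characterization of $\cL_s$ through supports---using that sections of $\cO_{Z_i}$ supported on the nowhere dense locus $Z_i\cap\bigcup_{s'\neq s}(X_0)^{s'}_{\rm sing}$ vanish---is a welcome sharpening of the gluing step, which the paper dispatches in one sentence by saying that colorability identifies same-colored summands across overlapping charts.
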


\begin{proof}
We first show the analogous local statement: 
\begin{align}\label{loc-prod}
\cE xt^1 (\Omega^1_U, \cO_U)\simeq 
\textstyle\bigoplus_{i\in I} 
\cO_{(U_i)_{\rm sing}}\end{align}
where $U \simeq \prod_{i\in I} U_i\times \bC^m$ 
and $U_i \simeq\{x_i^{(1)}\cdots 
x_i^{(m_i)}=0\}\subset \bC^{m_i}$ 
is the local product form
(\ref{product}).
The isomorphism (\ref{loc-prod})
follows from the 
K\"unneth formula:
\begin{align}\label{ext}\textstyle 
\cE xt^1(\Omega^1_U,\cO_U) \simeq 
\bigoplus_{i\in I}
\cE xt^1(\Omega^1_{U_i}, \cO_{U_i}) \simeq
\bigoplus_{i\in I} \cO_{(U_i)_{\rm sing}}.\end{align} 
The second isomorphism 
in (\ref{ext}) is the well-known
statement that the first-order deformations
of nc singularities are $1$-dimensional.
The local
statement globalizes to the desired global
statement by the colorability, which ensures
that a summand $\cO_{(U_i)_{\rm sing}}$ in (\ref{ext})
of a given
color $s(i)=s\in S$ is 
identified in an overlapping local coordinate
patch with a factor of the same color $s$.
\end{proof}

\begin{definition}\label{defn:dss}
An $S$-colored, polync variety
is {\it $d$-semistable} if 
$\cL_s\simeq \cO_{(X_0)^s_{\rm sing}}$
for all $s\in S$.
\end{definition}

As we will see, this generalizes
Friedman's original notion of 
$d$-semistability
\cite[Def.~1.13]{friedman}
for normal crossing degenerations over
a $1$-parameter base, 
to higher-dimensional bases.

\begin{definition}
An $S$-colored, polync variety $X_0$
is {\it smoothable} if there is a flat,
proper family $$\textstyle
X\to {\rm Spec} \,\,\bC[[u_s]]_{s\in S}$$ 
such that the central fiber
is $X_0$ and there is a local form (\ref{product}) on $X_0$
extending to a local form 
\begin{align}\label{product2}\textstyle 
\prod_{i\in I} \{x_i^{(1)}\cdots x_i^{(m_i)}=u_{s(i)}\}
\end{align}
on $X$. Here $s(i)$ is the color of the
$i$th factor. Similarly, we say
$X_0$ is {\it formally smoothable}
if we have a formal deformation
$\hX\to {\rm Spf} \,\,\bC[[u_s]]_{s\in S}$
with the same local form.
\end{definition}

\begin{definition}\label{semistable}
    We call
a morphism $X\to Y$ over a smooth base,
which is \'etale- or analytically-locally 
of the form (\ref{product2}) a {\it semistable morphism},
following \cite[Sec.~0.3]{ak},
\cite[Sec.~1.1]{alt}.
If additionally, $\Gamma(X_0)$ is 
a cube complex (i.e.~every polysimplex
is a product of intervals), we say
that $X\to Y$ is a {\it nodal morphism},
see \cite[Def.~5.1]{survey}, 
\cite[Def.~2.4]{egfs}.
\end{definition}

In particular, the total space
$X$ of a semistable morphism,
or a smoothing, is regular. 

\begin{proposition} A
(formally) smoothable, $S$-colored,
polync variety is $d$-semistable.
\end{proposition}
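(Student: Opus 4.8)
The plan is to produce, for each color $s\in S$, a nowhere-vanishing global section of $\cL_s$ directly out of the smoothing; by the decomposition $\cE xt^1(\Omega^1_{X_0},\cO_{X_0})\simeq\bigoplus_{s}\cL_s$ of the preceding proposition, such a section trivializes $\cL_s$ and so yields $d$-semistability. Let $X\to\Spec\bC[[u_s]]_{s\in S}$ be the (formal) smoothing, with local form (\ref{product2}). Restricting the family to the $u_s$-axis, that is, setting $u_{s'}=0$ for $s'\neq s$ and working to first order in $u_s$, produces a first-order deformation of $X_0$, i.e.~a class $\xi_s\in\Ext^1(\Omega^1_{X_0},\cO_{X_0})$. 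Applying the local-to-global (edge) map
\[
\Ext^1(\Omega^1_{X_0},\cO_{X_0})\longrightarrow H^0\big(X_0,\cE xt^1(\Omega^1_{X_0},\cO_{X_0})\big)=\bigoplus_{s'\in S} H^0(\cL_{s'}),
\]
whose value at a point is the induced local first-order deformation, and projecting to the $\cL_s$-summand, gives a section $\sigma_s\in H^0(\cL_s)$. The assertion to prove is that $\sigma_s$ is nowhere vanishing on $(X_0)^s_{\rm sing}$.

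This I would check in the local product form. Over a chart with product form (\ref{product}), restricting (\ref{product2}) to the $u_s$-axis smooths exactly the nc factors $U_i$ with $s(i)=s$, via the one-parameter family $\{x_i^{(1)}\cdots x_i^{(m_i)}=u_s\}$, while leaving every factor of another color unchanged (there $u_{s'}=0$). Under the K\"unneth decomposition (\ref{ext}), the first-order class of this restricted family therefore lands in the color-$s$ summands $\cO_{(U_i)_{\rm sing}}$, and in each it is the constant unit: the very identification $\cE xt^1(\Omega^1_{U_i},\cO_{U_i})\simeq\cO_{(U_i)_{\rm sing}}$ used in (\ref{loc-prod})--(\ref{ext}) sends the smoothing direction $\{x_i^{(1)}\cdots x_i^{(m_i)}=u_s\}$ to $1\in\cO_{(U_i)_{\rm sing}}$, which is the content of the $1$-dimensionality of first-order deformations of nc singularities. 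Hence at every point of $(X_0)^s_{\rm sing}$ the section $\sigma_s$ is locally a generator of $\cL_s$ (and its components in the summands $\cL_{s'}$ with $s'\neq s$ vanish), so $\sigma_s$ trivializes $\cL_s$ and $\cL_s\simeq\cO_{(X_0)^s_{\rm sing}}$. Since only the first-order neighborhood of $X_0$ in the $u_s$-direction is used, the argument applies verbatim to a formal smoothing.

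The main thing to get right -- and the only place the global hypotheses enter -- is that these chart-by-chart generators assemble into one honest global section, equivalently that the edge-map image $\sigma_s$ is a local unit \emph{everywhere}. Two features secure this. First, $u_s$ is a single globally defined coordinate on the base, so in every chart the color-$s$ factors are smoothed by the \emph{same} function $u_s$, removing any ambiguity in the local generator. Second, the $S$-coloring matches a color-$s$ factor in one chart with a color-$s$ factor in every overlapping chart -- this is exactly the compatibility of colors used to globalize (\ref{ext}) into $\bigoplus_s\cL_s$ -- so the local generators are sections of the \emph{same} bundle $\cL_s$ and agree on overlaps. I expect this reconciliation of the local trivializations to be the only delicate point; the globality of $u_s$ together with colorability resolves it, producing the desired nowhere-vanishing $\sigma_s\in H^0(\cL_s)$ and hence $d$-semistability.
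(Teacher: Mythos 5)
Your proof is correct and is essentially the paper's own argument: your class $\xi_s$ obtained by restricting the smoothing to the $u_s$-axis to first order is exactly the Kodaira--Spencer image ${\rm ks}(\tfrac{\partial}{\partial u_s})$, which the paper likewise pushes through the local-to-global map to $H^0(X_0,\,\cE xt^1(\Omega^1_{X_0},\cO_{X_0}))$ and identifies, via the K\"unneth decomposition (\ref{ext}) in the local form (\ref{product2}), as a nowhere-vanishing section of $\cL_s$. Your closing discussion of patching local generators is slightly redundant---since $\xi_s$ is a global class, its image $\sigma_s$ is automatically a global section and only the pointwise nonvanishing needs the local computation---but this does not affect correctness.
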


\begin{proof}
Consider the Kodaira--Spencer map 
\begin{align*} {\rm ks}\colon T_0\, {\rm Spec} \,\,\bC[[u_s]]_{s\in S} \to {\rm Ext}^1(\Omega^1_{X_0},\cO_{X_0}).
\end{align*}
We have a map
$\phi\colon
{\rm Ext}^1(\Omega^1_{X_0},\cO_{X_0})\to
H^0(X_0, \,\cE xt^1(\Omega^1_{X_0},\cO_{X_0}))$
by the local-to-global spectral
sequence. It
sends a global first-order
deformation to a collection
of compatible local first-order
deformations.
By the required
local form (\ref{product2}) of a 
smoothing, 
$(\phi\circ {\rm ks})(\tfrac{\partial}{\partial u_s})$
represents a nowhere vanishing local
section of $\cL_s$ along $(X_0)^s_{\rm sing}$
with respect to the local product expansion 
(\ref{ext}). It follows that 
$\cL_s\simeq \cO_{(X_0)^s_{\rm sing}}$
for all $s\in S$.
\end{proof}

\begin{remark}
``Polysnc'' is the smallest
class of singularities on fibers
of a flat morphism,
achievable by alteration. 
More precisely, let $f\colon X\to Y$
be any morphism of varieties.
By the resolution 
of the Abramovich--Karu conjecture \cite[Thm.~0.3]{ak},
\cite[Thm.~4.5]{alt}, there is an 
alteration $Y'\to Y$ and a birational
modification of the base change $f'\colon X'\to Y'$ for which $f'$ is (flat and) semistable.
And by Hodge-/weight-theoretic
considerations, it is impossible
to achieve a ``better'' result.
In this sense, polysnc singularities
and semistable morphisms, over
bases of arbitrary dimension, play exactly
the same role as snc singularities
and semistable morphisms over 
$1$-parameter bases.
\end{remark}

\section{Interpretation via log structures}

Following French
conventions, let $\bN\coloneqq \bZ_{\geq 0}$.
Let $0^S:= {\rm Spec}(\bN^S\to \bC)$ 
denote the ``logarithmic point''
with ghost sheaf $\bN^S$. It is the log
structure on ${\rm Spec}\,\bC$
with monoid $\bC^*\oplus \bN^S$ 
and structural morphism 
$(c, \sum_{s\in S} n_s e_s)\mapsto c\cdot 
0^{\sum_{s\in S} n_s}$, where by 
convention $0^0=1$. It is
the pullback of the divisorial log
structure on ${\rm Spec}\,\bC[[u_s]]_{s\in S}$ associated to the union 
of the coordinate hyperplanes,
to the origin $0\in {\rm Spec}\,\bC[[u_s]]_{s\in S}$.
See \cite{log}
for background on logarithmic geometry.

\begin{definition}\label{dss}
Let $X_0$ be an $S$-colored,
polync variety.
A {\it log structure of semistable type} 
on $X_0$
is the following data:
\begin{enumerate}
    \item a log structure $\cM_0$ on $X_0$ 
admitting \'etale charts \begin{align*} \left(\textstyle 
\bigoplus_{i\in I} \bN^{m_i}\right) \oplus \bN^{S\setminus \phi(I)} &\to \cO_{X_0,\, \textrm{\'et}}(U) \\ 
e_i^{(j)}&\mapsto x_i^{(j)} \textrm{ for }i\in I,\, j\in \{1,\dots,m_i\}, \\
e_s &\mapsto 0 \textrm{ for }s\in S\setminus \phi(I),
\end{align*}
where $x_i^{(j)}$ define 
\'etale-local coordinate charts
on $U$ to the local form (\ref{product}), and $\phi$
is the local $S$-coloring of the
factors $i\in I$,
\item
a morphism $f\colon (X_0,\cM_0)\to 
({\rm Spec}\,\bC, 0^S)$ 
of log schemes, 
which is given in local charts by
the monoid morphism $f^\flat\colon \bN^S\to \left(\bigoplus_{i\in I}\bN^{m_i}\right) \oplus \bN^{S\setminus \phi(I)}$ sending
\begin{align*} \textstyle f^\flat e_s &= \textstyle \sum_{j=1}^{m_i} e_i^{(j)}\textrm{ if }s=\phi(i), \\
f^\flat e_s &= e_s\textrm{ for }s\in S\setminus\phi(I).
\end{align*}
\end{enumerate}
\end{definition}

We will henceforth drop the $\textrm{\'et}$
subscript in the structure sheaf.
Definition \ref{dss} is very closely
related to a similar notion defined
in \cite[Def.~5.5]{li}, which does
not require $S$-colorability, but
in a sense, only demands 
the existence of $f$
``along the diagonal''
$({\rm Spec}\,\bC,0)\hookrightarrow ({\rm Spec}\,\bC,0^S)$.

\begin{proposition}\label{equiv-dss}
Let $X_0$ be an $S$-colored, polync variety.
Then $X_0$ is $d$-semistable if and only if 
it admits a log structure of semistable type,
as in Definition \ref{dss}.
\end{proposition}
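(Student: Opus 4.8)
The plan is to deduce both implications from a single \v{C}ech-cohomological computation that identifies the obstruction to globalizing a log structure of semistable type with the classes $[\cL_s]\in\Pic((X_0)^s_{\rm sing})$ of the line bundles produced by the Proposition computing $\cE xt^1(\Omega^1_{X_0},\cO_{X_0})\simeq\bigoplus_{s\in S}\cL_s$. The starting observation is that such a log structure always exists \emph{locally}: on an \'etale chart $U$ carrying the product form (\ref{product}), the monoid map $\bigoplus_{i\in I}\bN^{m_i}\to\cO_U$, $e_i^{(j)}\mapsto x_i^{(j)}$, together with $f^\flat e_s=\sum_{s(i)=s}e_i^{(j)}$, manifestly defines a log structure of semistable type over $0^S$. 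So the content is entirely in the gluing, and I would fix a cover $\{U_\alpha\}$ with such local models $\cM_\alpha$ and analyze when they patch compatibly with their maps $f_\alpha$ to the log point.

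For the easy direction, suppose $X_0$ carries a global log structure $\cM_0$ of semistable type with $f\colon(X_0,\cM_0)\to({\rm Spec}\,\bC,0^S)$. The image $u_s$ of $e_s$ under $f$ is then a globally defined element of the log structure, and on each chart it is expressed as $u_s=\prod_j x_i^{(j)}$ for the unique factor $i$ with $s(i)=s$ (using injectivity of $\phi_\sigma$). Exactly as in the proof that smoothability implies $d$-semistability, this relation is a first-order smoothing of the nc factor $U_i$ and hence determines, through the canonical local decomposition (\ref{ext}), a nowhere-vanishing local section $\sigma^s_\alpha$ of the summand $\cL_s$. Because $u_s$ is globally defined and the decomposition (\ref{ext}) is the canonical K\"unneth splitting, these local generators agree on overlaps and glue to a global trivialization $\cL_s\simeq\cO_{(X_0)^s_{\rm sing}}$, so $X_0$ is $d$-semistable.

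For the converse I would run this construction in reverse. Each local model $\cM_\alpha$ furnishes, via (\ref{ext}), a nowhere-vanishing section $\sigma^s_\alpha$ of $\cL_s$ over $(U_\alpha)^s_{\rm sing}$; on an overlap the two generators differ by a unit, $\sigma^s_\alpha=g^s_{\alpha\beta}\,\sigma^s_\beta$, and $\{g^s_{\alpha\beta}\}$ is a \v{C}ech $1$-cocycle representing $[\cL_s]$. The key point is that $\cM_\alpha$ and $\cM_\beta$ glue compatibly with their maps to $0^S$ precisely after rescaling the chart coordinates $x_i^{(j)}\mapsto a_i^{(j)}x_i^{(j)}$ so that this cocycle becomes trivial; since rescaling the $i$-th factor multiplies $u_{s(i)}$ by $\prod_j a_i^{(j)}$ and, by $S$-colorability, the factor $i$ carries the single color $s(i)$, the rescalings attached to distinct colors are independent. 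Hence the total obstruction splits as $\bigoplus_{s\in S}[\cL_s]\in\bigoplus_{s\in S}\Pic((X_0)^s_{\rm sing})$, and the $\cM_\alpha$ patch to a global log structure of semistable type if and only if every $[\cL_s]$ vanishes, i.e.~if and only if $\cL_s\simeq\cO_{(X_0)^s_{\rm sing}}$ for all $s$. When this holds I would write $g^s_{\alpha\beta}=h^s_\alpha/h^s_\beta$ and lift the $h^s_\alpha$ to units on $U_\alpha$, using surjectivity of $\cO^*_{U_\alpha}\to\cO^*_{(U_\alpha)^s_{\rm sing}}$, to produce the required rescalings.

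The main obstacle I anticipate is the bookkeeping that makes the converse rigorous: verifying that the local generator $\sigma^s_\alpha$ is genuinely intrinsic to $\cM_\alpha$, so that its transition functions are exactly the cocycle of $\cL_s$ and not some twist of it, and confirming that demanding compatibility with the structure maps $f_\alpha$ rigidifies the gluing problem enough that its obstruction lives in $H^1(\cO^*)=\Pic$ rather than in a genuine $H^2$-gerbe. Here $S$-colorability is doing the essential work, exactly as in the $\cE xt^1$ computation above: it guarantees that the automorphisms available for gluing act on the colors one at a time, so that the problem decouples into $|S|$ independent line-bundle trivializations with no cross terms on triple overlaps.
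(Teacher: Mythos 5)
Your overall architecture coincides with the paper's, which follows Kato's argument \cite{kato_arxiv} nearly verbatim with the colors decoupled: local models exist for free; your easy direction (log structure $\Rightarrow$ $d$-semistable, via the global section $f^\flat e_s$ inducing a nowhere-vanishing section of $\cL_s$) is sound and is in fact the direction the paper omits; and for the converse the paper likewise glues local charts by unit transition systems, one color at a time. But there is a genuine gap in your converse, at precisely the point you flag. First, trivializing $\cL_s$ only controls the gluing data \emph{along} $(X_0)^s_{\rm sing}$. Writing the transition between charts as $\zeta_i^{(j),\alpha}=u_s^{(j),\alpha\beta}\,\zeta_{i'}^{(\sigma j),\beta}$, compatibility with the structure morphisms $f_\alpha$ forces the isomorphism of log structures to carry the monoid section $f_\alpha^\flat e_s$ exactly to $f_\beta^\flat e_s$, with no unit twist, i.e.\ $\prod_j u_s^{(j),\alpha\beta}=1$ identically on $U_\alpha\cap U_\beta$. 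Your construction---matching the sections $\sigma^s_\alpha$ and lifting the $h^s_\alpha$ to units---only yields $\prod_j u_s^{(j),\alpha\beta}=1$ \emph{after restriction} to the color-$s$ singular locus, since that is where $\cL_s$ lives. Bridging this difference is not formal: the paper invokes the explicit modification of the transition system from \cite[Proof of Prop.~12.3]{kato_arxiv} to upgrade ``$=1$ on $(X_0)^s_{\rm sing}$'' to ``$=1$ on the whole overlap,'' and nothing in your \v{C}ech setup supplies a substitute.

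Second, your reduction of the gluing problem to a $\Pic$-valued obstruction tacitly assumes that a nowhere-vanishing section of $\cL_s$ determines the local model up to \emph{unique} isomorphism, so that the transitions automatically compose on triple overlaps. It does not: the local log structure has a nontrivial sheaf of automorphisms over the log point (branch-wise units $(c^{(j)})$ with $\prod_j c^{(j)}=1$, together with branch permutations), so your transition data is a torsor under a nontrivial sheaf and the a priori obstruction is a gerbe-type class, not a line-bundle class. You name this worry yourself, but your answer---that compatibility with the $f_\alpha$ ``rigidifies'' the problem and colorability decouples it---is an assertion of the theorem, not a proof of it: colorability does decouple the colors (this is the genuinely new point of the proposition, and the paper's proof confirms that transition systems of distinct colors never interact), but it does nothing to trivialize the per-color gerbe, which is exactly the classical nc case and is settled in Kato by an explicit verification of the cocycle condition (\cite[p.~28, Step 1]{kato_arxiv}, as cited by the paper). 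So your proposal has the right skeleton---the same skeleton the paper uses---but the two steps that constitute the actual content of the hard direction are missing.
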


\begin{proof} When $X_0$ is a normal
crossing variety
and $S=\{s\}$ is a singleton, this is 
\cite[Prop.~1.1]{nn}, see also
\cite[Thm.~11.7]{kato} and its arXiv
version \cite[Thm.~11.2]{kato_arxiv}. 
The same proof as \cite{kato_arxiv}
applies nearly verbatim, but we outline 
the necessary changes here
for completeness.
To prove the forward direction,
suppose $X_0$ is $d$-semistable, i.e.~
\begin{align}\label{iso}\textstyle
\cE xt^1(\Omega^1_{X_0},\cO_{X_0})\simeq 
\bigoplus_{s\in S} \cO_{(X_0)_{\rm sing}^s}.\end{align} 
Then,
we may choose elements of 
${\bf 1}_s\in H^0(X_0, \,\cE xt^1(\Omega^1_{X_0},\cO_{X_0}))$ 
where via an isomorphism (\ref{iso}),
${\bf 1}_s=(f_{s'})_{s'\in S}$ 
with $f_s=1$ and $f_{s'}=0$
for $s'\neq s$.
Taking the (K\"unneth) product
of the formula given in 
\cite[Lem.~12.1]{kato_arxiv},
we have, in a local coordinate 
chart (\ref{product}), an isomorphism
$$\textstyle
\cE xt^1(\Omega^1_{X_0},\cO_{X_0})\simeq \bigoplus_{s\in S} (\cI_{X_0}/
\cI_{X_0}\cI_{(X_0)_{\rm sing}^s})^\vee\otimes \cO_{(X_0)_{\rm sing}^s}$$
where $\cI_{X_0} = (x_i^{(1)}\cdots x_i^{(m_i)})_{i\in I}$ is the ideal of 
$X_0\subset \prod_{i\in I}\bC^{m_i} \times \bC^m$.
Following
\cite[Prop.~12.3]{kato_arxiv}, 
we have a projection 
$p_s\colon \cI_{X_0}/\cI_{X_0}^2\to 
\cI_{X_0}/\cI_{X_0}\cI_{(X_0)_{\rm sing}^s}$
and the section of the latter
sheaf corresponding to ${\bf 1}_s$ is, locally
over an open cover $\{U_\lambda^s\}$ of 
$(X_0)_{\rm sing}^s$,
the projection of a section $$p_s(v_s x_i^{(1)}\cdots x_i^{(m_i)})$$ where $s(i)=s$
and $v_s\in \cO_{X_0}^*(U_\lambda^s)$.
Set $$\zeta_i^{(j),\,\lambda}\coloneqq \threepartdef{v_sx_i^{(1)}}{j=1,}{x_i^{(j)}}{j=2,\dots,m_i,}{1}{j=m_i+1,\dots, \dim X_0+1.}$$
Then it follows exactly as in \cite[Prop.~12.2]{kato_arxiv} that there is a ``transition system''
of units $u_s^{(j),\,{\lambda\mu}}\in \cO_{X_0}^*(U_\lambda^s\cap U_\mu^s)$ and a permutation
$\sigma\in S_{m_i}$ such that 
$$\zeta_i^{(\sigma\cdot j),\,\lambda} = \zeta_{i'}^{(j),\,\mu}u_s^{(j),\,\lambda\mu}.$$
Here $i'\in I'$ is the index for
the coordinate
chart $U_\mu^s$ satisfying $s(i')=s$.
Furthermore, it follows that $\prod_j u_s^{(j),\,\lambda\mu}\vert_{(X_0)_{\rm sing}^s}=1$
by the condition that the $\zeta_i^{(j),\,\lambda}$
are produced from a global section
of $\cI_{X_0}/\cI_{X_0}\cI_{(X_0)_{\rm sing}^s}$.
By the same formula as in 
\cite[Proof of Prop.~12.3]{kato_arxiv},
it is possible to modify the transition
system $$\{u_s^{(j),\,\lambda\mu}\}_{j, \lambda,\mu}$$
(for each $s\in S$) to a new
transition system which satisfies the stronger
property
$\prod_j u_s^{(j),\,\lambda\mu}=1$
on all of $U^s_\lambda\cap U_\mu^s$.

Take a common
refinement of the collections
$\{U_\lambda^s\}$ for all $s\in S$
and adjoin \'etale coordinate charts
covering $X_0\setminus (X_0)_{\rm sing}$
to produce an 
\'etale open cover $\fU\coloneqq \{U_\lambda\}$
of $X_0$.
Then, for every
$s\in S$, the transition system 
of color $s$ defined in the 
previous paragraph restricts to this 
common cover and satisfies 
$\prod_j u_s^{(j),\lambda\mu}=1$
on every double overlap of the
color $s$ subcover $\fU_s$.

Using \cite[Eqn.~(12)]{kato_arxiv}, we define
charts of a log structure 
$\zeta\colon  
\left( \bigoplus_{i\in I}\bN^{m_i}\right) \oplus \bN^{S\setminus \phi(I)}\to \cO_{X_0}(U_\lambda)$
on $U_\lambda$ 
by sending $e_i^{(j)}\mapsto 
\zeta_i^{(j),\,\lambda}$ for $i\in I$,
and $e_s\mapsto 0$ for $s\in S\setminus \phi(I)$. Call it $\cM_\lambda$
and let $\cM_\lambda^s\subset \cM_\lambda$ be the sub-log structure corresponding to the color $s\in S$. 
Note that this definition 
is valid on open sets
$U_\lambda$ involving singular strata
of multiple colors;
the local coordinates
$\zeta_i^{(j),\,\lambda}$
for each color
are chosen independently.

\begin{enumerate}
\item\label{c1} When $U_\lambda$, $U_\mu\in \fU_s$,
the transition
system $\{u_s^{(j),\,\lambda\mu}\}$,
together with the corresponding
coordinate permutations defines
an isomorphism $\alpha_{\lambda\mu}^s\colon 
\cM_\lambda^s\vert_{U_{\lambda\mu}}\to 
\cM_\mu^s\vert_{U_{\lambda\mu}}$ 
between the color $s$
part of the log structure on a
double overlap
of $U_\lambda$, $U_\mu\in \fU_s$, as in
\cite[Eqn.~(13)]{kato_arxiv}.
Since
$\prod_j u_s^{(j),\,\lambda\mu}=1$,
it identifies $\sum_j e_i^{(j)}$ and
$\sum_j e_{i'}^{(j)}$, where
$s(i)=s(i')=s$. \smallskip

\item\label{c2} When 
$U_\lambda\in \fU_s$ 
and $U_\mu\notin \fU_s$ (or vice
versa), we uniquely specify an isomorphism
$\alpha_{\lambda\mu}^s\colon
\cM_\lambda^s\vert_{U_{\lambda\mu}}\to 
\cM_\mu^s\vert_{U_{\lambda\mu}}$
by requiring that it act
trivially on units and
identify the elements
$\sum_j e_i^{(j)}\leftrightarrow e_s$ 
where $s(i)=s.$ 

\item\label{c3}
When $U_\lambda$, $U_\mu\notin \fU_s$ we demand
that the isomorphism $\alpha_{\lambda\mu}^s$
respect the element $e_s$ and act
trivially on units. 
\end{enumerate}

Then, the above
isomorphisms $\alpha_{\lambda\mu}^s$
defined in (\ref{c1}, \ref{c2}, \ref{c3})
for all $s\in S$ collectively
determine a unique isomorphism 
$\alpha_{\lambda\mu}\colon
\cM_\lambda\vert_{U_{\lambda\mu}}
\to\cM_\mu\vert_{U_{\lambda\mu}}$.

For each $s\in S$, the cocycle
condition holds for the isomorphisms
$\alpha_{\lambda\mu}^s$
on triple overlaps
of elements of $\fU_s$ by
\cite[p.~28, Step 1]{kato_arxiv}.
Every other triple overlap is disjoint
from $(X_0)_{\rm sing}^s$ so here
the characteristic monoid in color
$s$ is $\bN$, and
the cocycle condition follows
from the uniqueness of the
isomorphisms fixing units and
identifying the distinguished
generators of the log charts. 
So the isomorphisms 
$\alpha_{\lambda\mu}$ satisfy the cocycle
condition and the local log structures 
$\cM_\lambda$
glue to give a global log structure
$(X_0,\cM_0)$.

Finally, as in
\cite[p.~29, Step 2]{kato_arxiv},
define on each $U_\lambda$ a homomorphism
\begin{align*}
f_\lambda^\flat\colon
\bC^*\oplus\bN^S
&\to
\cO_{X_0}^*(U_\lambda)
\oplus_{\zeta}\textstyle
\left(\left(\bigoplus_{i\in I}
\bN^{m_i}\right)
\oplus\bN^{S\setminus\phi(I)}\right),\\
f_\lambda^\flat(1,e_s)
&\coloneqq
\twopartdef{
(1,\sum_{j=1}^{m_i}e_i^{(j)})}
{s=\phi(i),}{
(1,e_s)}
{s\notin\phi(I).}
\end{align*}
By construction,
$\alpha_{\lambda\mu}$ identifies
$f_\lambda^\flat(1,e_s)$ with
$f_\mu^\flat(1,e_s)$ for every
$s\in S$. In case (\ref{c1}), this
follows from
$\prod_j u_s^{(j),\,\lambda\mu}=1$,
and in cases (\ref{c2}) and
(\ref{c3}) it holds by definition.
Consequently, the local homomorphisms
$f_\lambda^\flat$ glue to give a
global morphism of log schemes
\[
f\colon (X_0,\cM_0)\to
({\rm Spec}\,\bC,0^S).
\]

This completes the proof of the forward direction.
We omit the reverse direction, which follows 
in the same manner as \cite{kato_arxiv}, taking
into account the above modifications.
\end{proof}

\begin{proposition}\label{logsmooth}
Suppose that $X_0$ is a
$d$-semistable, $S$-colored,
polync variety.
The corresponding
log structure of semistable type $(X_0,\cM_0)\to 
({\rm Spec}\,\bC, 0^S)$ is log smooth
and saturated.
\end{proposition}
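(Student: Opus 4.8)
The plan is to check both properties étale-locally and to identify the local model with a standard toric morphism, using that log smoothness and saturatedness are étale-local on $X_0$ and stable under strict base change. Concretely, around a point of the open stratum $(X_\sigma)^\circ$ with index set $I=I(\sigma)$ and appearing colors $S'=\{s(i):i\in I\}\subseteq S$, I would model $\cM_0$ — after adjoining the pullback of $0^{S\setminus S'}$ in the directions of the colors that do not appear — on the fine saturated monoid
\begin{equation*}
Q=\bigoplus_{i\in I}\bN^{m_i}\oplus\bN^{S\setminus S'},\qquad e_i^{(j)}\mapsto x_i^{(j)},\quad e_s\mapsto 0\ (s\notin S').
\end{equation*}
The structure map $f^\flat\colon\bN^S\to Q$ then sends $e_s\mapsto\sum_{j=1}^{m_{i(s)}}e_{i(s)}^{(j)}$ for $s\in S'$ (with $i(s)$ the unique index of color $s$, using injectivity of the coloring) and $e_s\mapsto e_s$ for $s\notin S'$. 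Thus, up to the strict smooth factor $\bC^m$, the morphism $(X_0,\cM_0)\to(\Spec\bC,0^S)$ is the strict base change, along $0^S\hookrightarrow(\bA^S,\text{toric})$, of the toric morphism $g\colon\Spec\bC[Q]\to\Spec\bC[\bN^S]$ determined by $f^\flat$.

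Next I would verify that $g$ is log smooth via Kato's criterion. On character groups, $(f^\flat)^{\mathrm{gp}}\colon\bZ^S\to Q^{\mathrm{gp}}$ is the direct sum of the diagonals $\bZ\xrightarrow{(1,\dots,1)}\bZ^{m_i}$ over $i\in I$ together with the identity on $\bZ^{S\setminus S'}$; it is injective with cokernel $\bigoplus_{i\in I}\bZ^{m_i-1}$, which is torsion-free. Hence both the kernel and the torsion subgroup of the cokernel vanish, and since the chart is tautological the induced map $\Spec\bC[Q]\to\Spec\bC[\bN^S]\times_{\Spec\bZ[\bN^S]}\Spec\bZ[Q]$ is the identity; so $g$ is log smooth. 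As log smoothness is preserved by base change and by taking the product with the strict smooth morphism $\bC^m\to\Spec\bC$, the morphism $f$ is log smooth.

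For saturatedness I would show that $f^\flat$ is a saturated morphism of monoids. Being a direct sum, it suffices to treat the factors: the identity on $\bN^{S\setminus S'}$ is trivially saturated, while each diagonal $\bN\to\bN^{m_i}$ is integral (indeed $\bC[x_1,\dots,x_{m_i}]$ is free over $\bC[x_1\cdots x_{m_i}]$), has torsion-free cokernel, and has reduced fibers $\{x_1\cdots x_{m_i}=c\}$, hence is saturated. A direct sum of saturated maps is saturated, so $g$ is a saturated morphism; equivalently all of its geometric fibers are reduced. Since saturatedness is stable under base change within fine saturated log schemes and under product with the strict (smooth) factor, $f$ is saturated.

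The main obstacle is the bookkeeping of the colors absent from a given chart: one must enlarge the chart of Definition \ref{dss} by the pullback factor $\bN^{S\setminus S'}$ so that $f^\flat$ lands in a fine saturated monoid and the structural identity $\alpha\circ f^\flat=f^\ast\alpha$ holds — the product of the nc relations $x_i^{(1)}\cdots x_i^{(m_i)}=0$ ensures $\alpha(f^\flat e_s)=0$ for $s\in S'$, while the new generators realize $e_s\mapsto e_s$ for $s\notin S'$, both matching $f^\ast\alpha(e_s)=0$. Once this toric model is in place, log smoothness and saturatedness reduce to the standard numerical and reducedness statements, checked factor-by-factor and propagated by base change.
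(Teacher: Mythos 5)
Your proof is correct and follows essentially the same route as the paper, which simply invokes Kato's chart criterion for log smoothness \cite[Thm.~3.5]{kato_old} and the definition of saturated morphisms -- exactly the two verifications you carry out on the local toric model $\bN^S\to\bigoplus_{i\in I}\bN^{m_i}\oplus\bN^{S\setminus S'}$. Your explicit enlargement of the chart of Definition \ref{dss} by the factor $\bN^{S\setminus S'}$ (so that $\alpha\circ f^\flat$ matches the structural map for colors absent from the chart) is a detail the paper leaves implicit, and you handle it correctly.
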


\begin{proof}
This holds by 
\cite[Thm.~3.5]{kato_old} and applying
the definition of saturated morphisms. 
\end{proof}

It follows from \cite[4.5]{kato_old}
that the logarithmic deformations
of $(X_0,\cM_0)\to B_0 = ({\rm Spec}\,\bC, 0^S)$ are flat in the 
usual sense. Furthermore,
as a log smooth morphism, 
the log cotangent
bundle $\Omega_{X_0/B_0}^{\rm log}$ is a vector bundle, and the 
log canonical bundle 
$\omega_{X_0/B_0}^{\rm log}=
\wedge^{\dim X_0}\Omega_{X_0/B_0}^{\rm log}$ is a line bundle. Then, we say that:

\begin{definition} $X_0$ is 
{\it log Calabi--Yau} (over $B_0$) if 
$\omega_{X_0/B_0}^{\rm log}\simeq \cO_{X_0}$.
\end{definition}

In general, for log structures,
this does not mean that the usual dualizing
sheaf $\omega_{X_0}$ is trivial. For instance, 
$X_0$ could be a smooth projective variety
with a nontrivial snc divisor $D_0\in |-K_{X_0}|$ and $\cM_0$ could be the 
divisorial log structure 
associated to $D_0$.
But in the case of a log structure of semistable type, we do have $\omega_{X_0/B_0}^{\rm log}=\omega_{X_0}$.

Sections
of the logarithmic
cotangent, resp.~canonical, bundle
$\Omega_{X_0/B_0}^{\rm log}$,
resp.~$\omega_{X_0/B_0}^{\rm log}$,
are logarithmic one-forms,
resp.~top-forms,
on the components, whose residues
cancel on double loci.
These facts follow easily from the nc
case, by taking direct
sums, resp.~tensor products,
of the relevant
sheaves on nc factors of (\ref{product}).

Finally, we observe that $\bN^S$
is a sharp toric monoid---it is finitely
generated, integral, saturated, 
torsion-free, and (the sharpness
condition) $0\in \bN^S$ is the only 
invertible element.
Combined with Proposition \ref{logsmooth},
we may apply the main result
\cite[Thm.~1.1]{fp} (similar results
were obtained earlier in 
\cite{btt2, btt3}; we found
the presentation
\cite{fp} quite clear) to deduce:

\begin{theorem}\label{smoothable}
Suppose that $X_0$ is a proper,
$d$-semistable,
$S$-colored, polync variety
with trivial dualizing sheaf $\omega_{X_0}\simeq \cO_{X_0}$
and all algebraic components. Let $\cM_0$
be a log structure on $X_0$ of $d$-semistable
type. 
The log smooth deformations
of $(X_0,\cM_0)/B_0=({\rm Spec}\,\bC, 0^S)$ are 
unobstructed.
\end{theorem}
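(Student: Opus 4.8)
The plan is to deduce the theorem as a direct application of the logarithmic Bogomolov--Tian--Todorov theorem of Felten--Filip--Ruddat \cite[Thm.~1.1]{fp}, so that the bulk of the work consists in checking that $f\colon(X_0,\cM_0)\to B_0=({\rm Spec}\,\bC,0^S)$ satisfies the hypotheses of that theorem. Recall that \cite[Thm.~1.1]{fp} asserts unobstructedness of the log smooth deformation functor for a proper, log smooth, saturated, log Calabi--Yau morphism over a log point whose ghost monoid is sharp and toric. I would therefore organize the verification around precisely these conditions.

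First I would record the structural hypotheses that are already in hand. By Proposition \ref{logsmooth}, the log structure of semistable type $f$ is log smooth and saturated; properness of $X_0$ is assumed; and, as observed just above the theorem, $\bN^S$ is a sharp toric monoid. It then remains to establish the log Calabi--Yau condition. Here the key input is the identity $\omega_{X_0/B_0}^{\rm log}=\omega_{X_0}$, valid for log structures of semistable type: a section of $\omega_{X_0/B_0}^{\rm log}$ is a logarithmic top-form on the components whose residues cancel on the double loci, so a generating log top-form descends to a generator of the ordinary dualizing sheaf. Combined with the hypothesis $\omega_{X_0}\simeq\cO_{X_0}$, this gives $\omega_{X_0/B_0}^{\rm log}\simeq\cO_{X_0}$, i.e.~$X_0/B_0$ is log Calabi--Yau over $B_0$.

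With every hypothesis confirmed, invoking \cite[Thm.~1.1]{fp} yields the unobstructedness statement. The remaining role of the ``all algebraic components'' hypothesis is exactly the analytic heart of any Bogomolov--Tian--Todorov argument: it guarantees access to the Hodge-theoretic degeneration (degeneration of the Hodge--de Rham spectral sequence for the proper log smooth $f$) on which \cite{fp} relies to produce the Batalin--Vilkovisky/Hodge-theoretic structure underpinning unobstructedness. Algebraicity of the irreducible components lets one pass to algebraic de Rham cohomology and apply the degeneration results cited in \cite{fp}, rather than requiring an ad hoc analytic argument on a general compact analytic space.

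The main obstacle I anticipate is not the deformation-theoretic bookkeeping but matching conventions precisely with \cite[Thm.~1.1]{fp}: one must confirm that ``log structure of semistable type over $0^S$'' meets \emph{their} definition of the class of morphisms to which the theorem applies---in particular that saturatedness together with the sharp toric base $\bN^S$ are exactly the integrality hypotheses they require---and that the trivialization $\omega_{X_0/B_0}^{\rm log}\simeq\cO_{X_0}$ is the form of the Calabi--Yau condition used in their proof. Once this dictionary is fixed, the degeneration afforded by the algebraicity of the components supplies the only genuinely nontrivial analytic ingredient, and the conclusion follows formally.
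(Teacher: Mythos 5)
Your proposal is correct and follows essentially the same route as the paper, which deduces the theorem by combining Proposition \ref{logsmooth} (log smoothness and saturatedness of the semistable-type log structure), the observation that $\bN^S$ is a sharp toric monoid, and the identity $\omega_{X_0/B_0}^{\rm log}=\omega_{X_0}$ to verify the log Calabi--Yau hypothesis, then invoking \cite[Thm.~1.1]{fp}. Your account of the ``all algebraic components'' hypothesis as feeding the Hodge-theoretic degeneration input of \cite{fp} is likewise consistent with the paper's framing.
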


\begin{corollary}\label{universal}
There is a universal formal
log smooth deformation
$$
\hX\to \hB\coloneqq {\rm Def}_{X_0/B_0}\simeq 
{\rm Spf}\,\bC[[u_s,v_t]]_{s\in S,\,t\in T}
$$
where $\{0\}\times
{\rm Spf}\,\bC[[v_t]]_{t\in T}$
is the locus of locally trivial, log smooth
deformations of 
$X_0/B_0$.
\end{corollary}

\begin{proof}
By \cite[Thm.~8.7]{kato}, the logarithmic
deformation functor admits a miniversal
formal deformation $\pi\colon \hX\to\hB$.
By Theorem \ref{smoothable}, $\hB$
is formally smooth, and hence has the
above form.

Let $\eta_0$ be a generator of
$\omega_{X_0/B_0}^{\rm log}\simeq \cO_{X_0}$.
It lifts to a (nonvanishing)
generator $\eta$ of
$\omega_{\hX/\hB}^{\rm log}$ by
the local freeness of the (logarithmic) Hodge 
bundle $$\cH^{d,0}\coloneqq \pi_* \omega_{\hX/\hB}^{\rm log}$$
for $d=\dim X_0$,
see \cite[Thm.~7.1]{Hodge-const}.
Contraction with $\eta$ gives an isomorphism
\[
\Theta_{\hX/\hB}^{\rm log}\simeq
\wedge^{d-1}\Omega_{\hX/\hB}^{{\rm log}}
\]
where $\Theta^{\rm log}_{\hX/\hB}$
is the relative logarithmic tangent bundle.
Again by \cite[Thm.~7.1]{Hodge-const},
$$\cH^{d-1,0}\coloneqq \pi_*(\wedge^{d-1}\Omega_{\hX/\hB}^{{\rm log}})$$
is a vector bundle, so
$H^0(X_b,\Theta_{X_b}^{\rm log})$
has constant dimension for $b\in\hB$.
Since $\hB$ is smooth, and in particular
reduced, Wavrik's criterion
\cite[Thm.~4.2]{wavrik} shows that the
miniversal deformation is universal 
(the proof of the criterion applies
in the log smooth setting,
with vertical vector fields replaced
by logarithmic vertical vector fields).
\end{proof}

Alternatively, 
there is
an analytic smoothing 
$X\to B= \Delta^S\times \Delta^T$
over a polydisk, which agrees 
with the family $\hX$ up to any
specified order
\cite[Thm.~B.1]{sr}. In particular,
once $X$ and $\hX$ agree up to order $2$,
$X$ is a smooth analytic
space, i.e.~a complex manifold,
and the morphism $X\to B$
is an analytic, semistable morphism 
(see Def.~\ref{semistable}).
Its logarithmic Kodaira--Spencer map is an 
isomorphism, so $X\to B$ is miniversal. 
The formal completion is therefore 
isomorphic to $\hX\to\hB$, 
by Corollary \ref{universal},
and so $X\to B$
is universal as well.

If additionally, $X_0$
is a projective variety with ample
line bundle
$L_0$ then by \cite[Ch.~11]{felten},
the deformations of the pair $(X_0, L_0)$
are also unobstructed. In turn, by Grothendieck
algebraization,
we can produce
not just a formal smoothing, but
a projective algebraic smoothing $(X,L)\to {\rm Spec}\,\bC[[u_s,v_t]]_{s\in S,\,t\in T'}$
where $|T'|\leq |T|$ could be smaller,
because we only take deformations 
keeping the line bundle.

\begin{remark}
The original version of Theorem \ref{smoothable}
is Friedman's result \cite[Thm.~5.10]{friedman}
on the smoothability of $d$-semistable,
normal crossings K3 surfaces 
$X_0$ (see Def.~\ref{kulikov-k3}
below). The ``disadvantage'' of a classical
deformation theory approach is that
${\rm Def}_{X_0} = \Delta^{20}\cup_{\Delta^{19}}\Delta^N $
is a union of two components. One
is the smoothing component
$\Delta^{20}$ over which the general
fiber is an analytic K3 surface. The
other ``undesirable''  component
$\Delta^N$ is a large
component of locally trivial nc deformations,
whose general fiber is not $d$-semistable.
These components
meet along a divisor $\Delta^{19}\subset \Delta^{20}$ of locally trivial, $d$-semistable
deformations.

On the other hand, the
``log smooth deformations'' 
${\rm Def}_{X_0/B_0}$ map into
the smoothing
component $\Delta^{20}$, as the
fibers over $\Delta^N\setminus \Delta^{19}$
are not $d$-semistable, and so
do not admit a log structure (of semistable type). Though, caveat 
lector: In the Type III
snc case, $\dim {\rm Def}_{X_0/B_0}=21$ and
to recover a $20$-dimensional base,
we would have to rigidify the deformation
functor ${\rm Def}_{X_0/B_0}$.
See Propositions \ref{reparam-action}, 
\ref{reparam-quotient}, and \ref{bigger}.

The first generalization of Friedman's work 
to higher-dimensional $K$-trivial varieties
was the work of Kawamata--Namikawa \cite{nn},
who reinterpreted it in terms
of logarithmic geometry, and extended
it to 
$K$-trivial, $d$-semistable, nc varieties,
that satisfy the additional hypotheses
$H^{\dim X_0-1}(X_0,\cO_{X_0})=0$
and $H^{\dim X_0-2}(X_0^\nu,\cO_{X_0^\nu})=0$
for $X_0^\nu =\bigsqcup_i X_i$ the normalization.

The key insight leading to a more general statement
(a Bogomolov--Tian--Todorov type 
unobstructedness theorem for the log smooth
deformations of a general 
log smooth, log Calabi--Yau variety, with
no additional technical hypotheses)
came in the work of Chan--Leung--Ma \cite{btt2}, who introduced
the correct dGLA-theoretic set-up, of 
``$\Lambda$-curved 
Batalin–Vilkovisky algebras''. 
We recommend
Felten
\cite{felten} for an 
excellent exposition on
the topic and its history. 

Until recent work
\cite{btt2, btt3, fp}, the 
smoothability question has largely
focused on $1$-parameter degenerations.
But as \cite{survey, egfs} show, multiparameter
degenerations can, by virtue
of their additional combinatorial information,
be powerful tools to address
questions about algebraic cycles, 
(stable) rationality, etc.  In particular,
multiparameter semistable degenerations,
such as the universal deformation of 
a $d$-semistable, $S$-colored, polync variety
have the remarkable property that their
total space $X$ is smooth, allowing one to
specialize algebraic cycle classes transverse to the central fiber, see
e.g.~\cite[Lem.~2.16, Lem.~4.14]{egfs}.
\end{remark}

\begin{definition}\label{kulikov-k3}
A {\it Kulikov model} $f\colon X\to Y$
is a relatively $K$-trivial, semistable morphism.
\end{definition}

As noted, Theorem \ref{smoothable} implies
that the universal log smooth deformation
$X\to B$ 
of a $d$-semistable,
$S$-colored, polync, $K$-trivial variety
is a Kulikov model. Furthermore, the restriction
to any $S$-dimensional polydisk $Y\subset B\simeq \Delta^S\times \Delta^T$
in the universal deformation,
transversely intersecting the deepest
stratum
$\{0\}\times \Delta^T$,
is also a Kulikov model.

\begin{proposition}\label{reparam-action}
Fix $X_0/B_0$ as in Theorem \ref{smoothable}.
Then the formal torus
$\widehat{\bG}_m^S
$
acts naturally on $\hB$. The forgetful morphism
$
\hB\to\operatorname{Def}_{X_0}
$
is invariant under this action.
\end{proposition}

\begin{proof}
An $A$-valued point of $\hB$, 
for an Artinian local
$\bC$-algebra $A$, 
consists of a log smooth family
$
f\colon 
X_A\to({\rm Spec}\,A,\mathcal M_A),
$
a morphism
$
\alpha\colon 
{\rm Spec}\,A
\to
{\rm Spf}\,\bC[[u_s]]_{s\in S}
$
defining the log structure $\mathcal M_A$ on the base, and a
distinguished identification of its closed fiber with $X_0/B_0$.

Let
$
t=(t_s)_{s\in S}
\in\widehat{\bG}_m^S(A)
=(1+\mathfrak m_A)^S.
$
Define a new morphism $t\cdot\alpha$ by
$$
(t\cdot\alpha)^*(u_s)
=
t_s\alpha^*(u_s).
$$
The new morphism determines a log structure
$\mathcal M_A^t$ on ${\rm Spec}\,A$.
Multiplication of the defining 
chart by the units $t_s$
induces an isomorphism
$$
\iota_t\colon
({\rm Spec}\,A,\mathcal M_A)
\xrightarrow{\sim}
({\rm Spec}\,A,\mathcal M_A^t)
$$
whose underlying morphism of schemes is the identity. We define
$$
t\cdot(f,\alpha)
=
(\iota_t\circ f,t\cdot\alpha).
$$
Since $t_s\equiv1$ mod $\mathfrak m_A$,
the isomorphism
$\iota_t$ restricts to the identity on the closed log point
$B_0$. Thus the distinguished 
identification of the closed fiber
with $X_0/B_0$ is preserved.

This construction is functorial in $A$ and compatible with
multiplication of 
$t_1, t_2\in \widehat{\bG}_m^S$. So,
by pro-representability, it
defines an action
on $\hB={\rm Def}_{X_0/B_0}$. 
The underlying family of schemes
$
X_A\to{\rm Spec}\,A
$
is unchanged, so the forgetful morphism to
$\operatorname{Def}_{X_0}$ is
$\widehat{\bG}_m^S$-invariant.
\end{proof}

Let
$
D_s\coloneqq (X_0)_{\rm sing}^s
$
and suppose that $D_s$ is connected 
for every $s\in S$.\footnote{This is 
a natural condition to impose.
Suppose $d$-semistability 
(Def.~\ref{defn:dss}) holds
for an $S$-colored, polync variety
$X_0$. Then 
the same holds for an extended
color set $S'$,
indexed by the connected components
of $(X_0)_{\rm sing}^s$ for all $s\in S$.
Furthermore, now $(X_0)_{\rm sing}^{s'}$
is connected, for all $s'\in S'$ and
by Corollary \ref{universal}, we still
have a universal smoothing, but this
time with more smoothing parameters.} Let
$$
\cT^1_{X_0}
=
\cE xt^1(\Omega^1_{X_0},\cO_{X_0})
$$
be the sheaf of first-order local 
deformations of $X_0$.
As in \cite[p.~426]{olsson}, the choice
of $d$-semistable log structure 
$\cM_0$ corresponds to a trivialization 
of $\cT^1_{X_0}\simeq \bigoplus_{s\in S}
\cO_{D_s}$ along every $D_s$.
By the connectedness of $D_s$,
the space of such trivializations is a torsor
over $\bG_m^S$.

It follows that we have a homomorphism
\begin{align}\label{rho}
\rho\colon {\rm Aut}^0(X_0)\to \bG_m^S
\end{align}
where ${\rm Aut}^0(X_0)$ acts by pullback
on trivializations of $\cT_{X_0}^1$. 
Note that $\ker \rho = {\rm Aut}^0(X_0,\cM_0)$
and
${\rm im}\,\rho\subset \bG_m^S$
is a subtorus.
Let $$\bG \subset \bG_m^S$$ 
be a complementary subtorus to ${\rm im}\,\rho$.
We have the following proposition:

\begin{proposition}\label{reparam-quotient}
Continue in the setting of Proposition
\ref{reparam-action} under the additional
hypothesis that $(X_0)_{\rm sing}^s$ is connected
for all $s\in S$. The action
of $\widehat{\bG}$ 
on $\hB$ is free, and the
quotient
$
\hB/\widehat{\bG}
$
exists. It is the base of a semi-universal,
$d$-semistable deformation of $X_0$. 
This deformation
is furthermore universal when
$
\bG=\bG_m^S.
$
\end{proposition}

\begin{proof}
Let $\Theta_{X_0}$ denote the ordinary tangent sheaf
of $X_0$ and let $\Theta^{\rm log}_{X_0/B_0}$ denote
the logarithmic tangent bundle.
We have an exact sequence
\begin{align}\label{exact}
0\to \Theta_{X_0/B_0}^{\rm log}\to 
\Theta_{X_0}\to \cT_{X_0}^1\to 0. \end{align} 
This is the logarithmic analogue of the relative 
tangent sequence for a normal crossings variety; see
\cite[Prop.~5.4 and Ex.~5.7]{kato}. 
The same calculation as {\it loc.cit.}~applies
to the product charts defining a polync variety.

Since every $D_s$ is connected, the
$d$-semistability gives
$$\textstyle 
H^0(X_0,\cT^1_{X_0})
\simeq
\bigoplus_{s\in S}H^0(D_s,\cO_{D_s})
\simeq\bC^S.
$$
Under this identification, the coboundary map
associated to the exact sequence (\ref{exact}) is
$$
\partial\colon
\bC^S\to
H^1(X_0,\Theta^{\rm log}_{X_0/B_0}).
$$
The differential at the identity of the
$\widehat{\bG}_m^S$-action of Proposition
\ref{reparam-action} is precisely $\partial$. 
Indeed,
an infinitesimal coordinate rescaling changes the
chosen trivializations of $\cT^1_{X_0}$ and
$\partial$ measures whether this change is 
induced by an infinitesimal automorphism 
of $X_0$. This is
also the infinitesimal form of the 
torsor description
in \cite[p.~426]{olsson}.
Thus, $${\rm Lie}(\bG) \subset {\rm Lie}(\bG_m^S)=\bC^S$$ maps isomorphically
onto its image under $\partial$, whereas
${\rm Lie}({\rm im}\,\rho)$ forms the
kernel
of $\partial$.

On the level of tangent spaces,
the forgetful map from logarithmic deformations
to deformations is given by the middle downward
morphism of
\[
\xymatrix{
0 \ar[r] &
H^1(X_0,\Theta^{\rm log}_{X_0/B_0})
\ar[r] \ar[d] &
T_0\hB
\ar[r] \ar[d] &
\bC^S
\ar[r] \ar[d]^{\simeq} &
0 
\\
0 \ar[r] &
H^1(X_0,\Theta_{X_0})
\ar[r] &
T_0{\rm Def}_{X_0}
\ar[r] &
H^0(X_0,\cT^1_{X_0})
\ar[r] &
0.
}
\]
Here the bottom short exact 
sequence is the local-to-global sequence.
The connectedness of $D_s$ 
ensures
the rightmost downward arrow
is an isomorphism, and in turn,
the commutativity is what
ensures the surjectivity of
$T_0{\rm Def}_{X_0}\to H^0(X_0,\cT^1_{X_0})$.

It follows
that \begin{align}\label{eq}\ker(T_0\hB\to T_0{\rm Def}_{X_0})
\simeq {\rm im}(\partial) \simeq 
{\rm Lie}(\bG).\end{align}
The formal deformation space $\hB$ 
is smooth by Theorem \ref{smoothable}. 
It follows from (\ref{eq})
that the action of $\widehat{\bG}$
on $\hB$ is free, as infinitesimally,
it acts by translations on $H^1(X_0,\Theta_{X_0/B_0}^{\rm log})\subset T_0\hB$.

Hence, there exists a smooth
slice $\hB\simeq \widehat{\bG}\times \hC$
and $T_0\hC\to T_0{\rm Def}_{X_0}$ is injective. 
The restriction of the universal
family $\hX\to \hB$ to $\hC$ is complete
for $d$-semistable deformations of $X_0$
as all such deformations
lift to logarithmic deformations
of $(X_0,\cM_0)/B_0$ which can
in turn be translated by $\widehat{\bG}$
to land in $\hC$.
Thus, the restriction of $\hX$ to $\hC$
is semi-universal
for the $d$-semistable deformations
of $X_0$.

When $\bG=\bG_m^S$  
we have ${\rm ker}(\partial)= 0$. Hence
$h^0(X_0,\Theta_{X_0})=
h^0(X_0,\Theta_{X_0/B_0}^{\rm log})$
from the long exact sequence of (\ref{exact}). 
The same equality holds for the generic
fiber of $\hX\to \hB$, which is smooth. 
Since, $h^0(X_b,\Theta_{X_b}^{\log})$ is constant
(see Cor.~\ref{universal}) and
$h^0(X_b,\Theta_{X_b})$ 
is upper semicontinuous, 
we conclude that
$h^0(X_b,\Theta_{X_b})$ is constant 
for all $b\in \hB$. Then Wavrik's criterion
\cite[Thm.~4.2]{wavrik} applies again to upgrade
the semi-universal deformation over $\hC$ to a
universal deformation.
\end{proof}

We note that one may also slice the universal 
analytic log smooth deformation family,
giving a polydisk $C\subset B\simeq 
\Delta^S\times \Delta^T$, where $C\simeq \Delta^S\times \Delta^{T'}$ with
$|T|=|T'|+ |S|-\dim ({\rm im}\,\rho)$.

\begin{definition}\label{sm-comp}
We write $C = {\rm smDef}_{X_0}\subset 
{\rm Def}_{X_0}$ for the semi-universal
$d$-semistable deformation of $X_0$.
It is the {\it smoothing component}
of ${\rm Def}_{X_0}$.
\end{definition}

The term {\it smoothing component}
is justified by the fact that the general
fiber of $X\to B$ is smooth with trivial log
structure. Thus, at the general fiber, the
logarithmic and classical deformation problems
coincide. It follows that the forgetful map 
$B\to C\to {\rm Def}_{X_0}$
dominates a component
of the Kuranishi space. 
Since $T_0C\hookrightarrow 
T_0{\rm Def}_{X_0}$ injects, 
this component is, on the level of germs,
identified with $C$.

\section{Polysnc $K$-trivial surfaces}

We assume throughout this section
that $X_0$ is a proper
algebraic space (e.g.~that
the components of the normalization 
are proper varieties). By Definition
\ref{colorable}, we have:

\begin{observation}\label{obs}
A $2$-dimensional,
$S$-colored polysimplicial complex 
$\Gamma$ is a 
polyhedral complex built
from triangles and squares, admitting
an $S$-coloring of the edges $E(\Gamma^{[1]})\to S$
such that:
\begin{enumerate}
\item all three sides of a triangle are the same color,
\item opposite sides of a square are the same
color, and
\item each square uses exactly two colors.
\end{enumerate}
\end{observation}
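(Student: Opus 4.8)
The plan is to unwind Definition \ref{colorable} in dimension two, where the only polysimplices that arise are triangles and squares. First I would establish the structural claim: a polysimplex $P_\sigma = \prod_{i \in I(\sigma)} \sigma_{m_i - 1}$ has real dimension $\sum_{i \in I(\sigma)}(m_i - 1)$, so a $2$-dimensional cell requires $\sum_i (m_i-1) = 2$ with each $m_i \geq 2$. The only solutions are a single factor with $m_i = 3$ (a $2$-simplex, i.e.~a triangle, with $|I(\sigma)| = 1$) or two factors each with $m_i = 2$ (a product $\sigma_1 \times \sigma_1$, i.e.~a square, with $|I(\sigma)| = 2$). Hence $\Gamma$ is built from triangles and squares, and its $1$-skeleton $\Gamma^{[1]}$ consists of $1$-simplices, each a polysimplex with a single factor, which therefore carries exactly one color under any $S$-coloring. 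The statement is then an equivalence: an $S$-coloring of $\Gamma$ and an edge-coloring $E(\Gamma^{[1]}) \to S$ satisfying (1)--(3) are the same data, and I would prove both directions.

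For the forward direction, I would restrict a given $S$-coloring $\{\phi_\sigma\}$ to the $1$-skeleton to obtain the edge-coloring, and verify (1)--(3) cell by cell using the compatibility relation $\phi_\tau|_{I(\sigma)} = \phi_\sigma$ along face inclusions $P_\sigma \subset P_\tau$. For a triangle $P_\tau$ with $I(\tau) = \{i\}$ and color $s = \phi_\tau(i)$, each of its three edges is a $1$-dimensional face $P_\sigma$ with $I(\sigma) = \{i\}$, so each inherits the color $s$, giving (1). For a square $P_\tau$ with $I(\tau) = \{i_1, i_2\}$, the four edges split into the two pairs of opposite sides lying in the two factor-directions: the two edges in direction $i_1$ inherit $\phi_\tau(i_1)$ and the two in direction $i_2$ inherit $\phi_\tau(i_2)$, giving (2); meanwhile the injectivity of $\phi_\tau$ forces $\phi_\tau(i_1) \neq \phi_\tau(i_2)$, giving (3).

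For the reverse direction, I would reconstruct an $S$-coloring from an edge-coloring satisfying (1)--(3). I define $\phi_\sigma$ on a vertex as the empty map, on an edge as its given color, on a triangle (with $I = \{i\}$) as the common color of its three edges (well-defined by (1)), and on a square (with $I = \{i_1, i_2\}$) by sending each factor-direction to the common color of the corresponding pair of opposite sides (well-defined by (2)); condition (3) makes this assignment injective, as Definition \ref{colorable} demands. The remaining content is the compatibility $\phi_\tau|_{I(\sigma)} = \phi_\sigma$ across every face inclusion, which reduces to matching an edge's color against the factor-color of whatever triangle or square contains it; this holds by construction, since each cell's factor-colors are \emph{defined} to be the colors of its own edges, and the edge-coloring is a single globally defined function.

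The step I expect to require the most care is the reverse direction's gluing check, namely verifying that the per-cell factor-colorings assemble compatibly across all shared faces. The delicate point is that within each square the two factor-directions must be unambiguously separated, which is exactly what condition (3) guarantees, while conditions (1)--(2) ensure the resulting labeling is consistent with every adjacent cell along a shared edge. I would emphasize that the obstruction to such a consistent global coloring is genuine and is precisely illustrated by the non-colorable complex in Figure \ref{polysim2}, where a chain of triangles propagates a single color around a square and forces its two factor-directions to coincide, violating (3).
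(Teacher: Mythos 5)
Your proposal is correct and is exactly the intended content: the paper presents this as an immediate Observation following ``By Definition \ref{colorable}'' with no written proof, and your unwinding---classifying $2$-dimensional polysimplices via $\sum_{i}(m_i-1)=2$, $m_i\geq 2$, and identifying an $S$-coloring with an edge-coloring satisfying (1)--(3) through the compatibility relation $\phi_\tau\vert_{I(\sigma)}=\phi_\sigma$---is precisely the argument the paper leaves implicit. Your verification of both directions (including injectivity of $\phi_\tau$ giving condition (3), and the non-colorable complex of Figure \ref{polysim2} as the genuine obstruction) is careful and matches the paper's framing.
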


\begin{definition}
A {\it poly(s)nc log Calabi--Yau pair} $(V,D)$ is a smooth
proper variety $V$, together with an anticanonical
divisor $D\in |-K_V|$, such 
that $D$ is reduced and poly(s)nc.
\end{definition}

It follows easily from adjunction
that a poly(s)nc variety
with $\omega_{X_0}\simeq \cO_{X_0}$
is a union of poly(s)nc log CY pairs, glued
along anticanonical divisors, so that
no boundary remains.

In particular, since poly(s)nc singularities
in dimension $1$ are just nodes/(s)nc singularities,
a poly(s)nc log CY surface $(V,D)$ is just
a smooth proper surface with a
reduced, (s)nc
anticanonical divisor $D\in |-K_V|$,
also called a Looijenga pair. See \cite{friedman2015} for a survey
on their geometry. Such pairs break
into three natural classes:

\begin{enumerate}
\item\label{case1} When
$D$ has nodes, $V$ is rational and
$D$ is a cycle of smooth rational curves.
\item\label{case2} When $D$ is smooth, either 
$V$ is rational and $D$ is a smooth
elliptic curve, or
\item\label{case3} $V$ is ruled
over an elliptic curve, and $D$
is a disjoint union of two elliptic
sections.
\end{enumerate}

\begin{proposition}\label{homeo-type}
Let $X_0$ be a $d$-semistable,
$S$-colored, polync surface with 
$\omega_{X_0}\simeq \cO_{X_0}$. Then
$\Gamma(X_0)$ is a point, an interval,
a circle, or a triangle-squarulation
of the two-sphere $\bS^2$ or the two-torus
$\bT^2$. For $\bS^2$ or the interval,
the general fiber $X_t$ of the universal
log smooth
deformation $X\to \Delta^{S}\times \Delta^T$
is a K3 surface, and for $\bT^2$
or the circle, it is an abelian surface.
\end{proposition}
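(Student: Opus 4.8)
The plan is to analyze $\Gamma(X_0)$ as a polysimplicial complex using the local combinatorial constraints from Observation \ref{obs} together with the $K$-triviality hypothesis, which forces a strong Euler-characteristic balance at each vertex. First I would establish that $\Gamma(X_0)$ is a pure polysimplicial complex whose top-dimensional cells have dimension equal to the number of components meeting in a deepest stratum, and that $K$-triviality via the log Calabi--Yau structure forces the underlying topological space to be a closed manifold (or lower-dimensional degenerate case). The key mechanism is that each closed $0$-stratum $X_\sigma$ (a triple or deeper intersection point) is a poly(s)nc log CY pair of dimension $0$, and the adjunction/residue-cancellation of logarithmic top-forms from Proposition \ref{logsmooth} forces the components to glue with no remaining boundary—this is what rules out any cell having a free face and shows $\Gamma(X_0)$ is without boundary.

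The heart of the argument is a local-to-global Euler characteristic computation. For each $1$-dimensional stratum (a double curve) $D$, the Looijenga-pair classification (cases \ref{case1}--\ref{case3}) tells us that $D$ is either a cycle of rational curves, an elliptic curve, or two elliptic sections, and the self-intersection/triviality of $\omega_{X_0}$ pins down how these curves assemble. I would show that the link of each vertex of $\Gamma(X_0)$ is a circle (so $\Gamma(X_0)$ is a closed surface), using that the $2$-cells incident to a vertex correspond to the triple points lying on the surfaces through that vertex, and that $d$-semistability/$K$-triviality forces these to close up into a cycle rather than a tree or branched configuration. Combined with Observation \ref{obs}, which says the $2$-cells are exactly triangles and squares subject to the three coloring rules, this yields that $\Gamma(X_0)$ is a triangle-squarulation (a CW-decomposition of a closed surface into triangles and squares) of some closed surface, or a degenerate low-dimensional case (point, interval, circle) when $X_0$ has fewer components.

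To pin down \emph{which} closed surface, I would invoke the classification of the general fiber $X_t$ of the universal deformation from Theorem \ref{smoothable}: $X_t$ is a smooth proper $K$-trivial surface, hence (after accounting for the possible fiber types) a K3 or abelian surface. The dual complex $\Gamma(X_0)$ is the essential skeleton, so its homeomorphism type is a deformation invariant computing $H^\ast(X_t)$ via the weight filtration on the limit mixed Hodge structure; a K3 surface (with $H^1 = 0$, $b_2 = 22$) forces $\Gamma(X_0)\simeq \bS^2$, while an abelian surface (with $H^1\neq 0$) forces $\Gamma(X_0)\simeq \bT^2$. Concretely I would match the reduced cohomology $\wH^\ast(\Gamma(X_0);\bQ)$ against the graded pieces $\Gr^W_0 H^\ast(X_t)$ of the Clemens--Schmid/limit Hodge structure, so that $\Gamma(X_0)$ being a homology $\bS^2$ versus homology $\bT^2$ is read off from $h^{0,0}$ of the limit in the relevant weights.

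The main obstacle I anticipate is excluding exotic closed surfaces and non-manifold configurations: one must rule out that $\Gamma(X_0)$ is a higher-genus surface, a non-orientable surface, or a singular (non-manifold) complex with vertices whose links are not circles. I expect the cleanest route is precisely the Hodge-theoretic invariance, since $\Gamma(X_0)$ controls the graded pieces $\Gr^W$ of the limit mixed Hodge structure of the smoothing; $K$-triviality of $X_t$ (a K3 or abelian surface) severely constrains these graded pieces and hence the rational homotopy/homology type of $\Gamma(X_0)$, while orientability and the manifold property follow from Poincaré duality on the dual complex of a semistable, $K$-trivial degeneration. Verifying that only $\bS^2$ and $\bT^2$ survive—and matching them to K3 versus abelian—will require care in tracking the weight filtration, but no genuinely new input beyond the unobstructedness and the Looijenga classification already in hand.
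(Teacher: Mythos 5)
There is a genuine gap at the decisive step of your argument: every Hodge-theoretic input you invoke --- the identification of $\Gamma(X_0)$ with an essential skeleton, the isomorphism $\wH^\ast(\Gamma(X_0);\bQ)\simeq \Gr^W_0 H^\ast_{\lim}$, Clemens--Schmid, and ``Poincar\'e duality on the dual complex of a semistable $K$-trivial degeneration'' --- is standard only for a \emph{one-parameter} semistable degeneration with smooth total space. Here the family is $X\to\Delta^S\times\Delta^T$, and you never explain how to get into that setting. The naive fix, restricting to a generic arc or to the diagonal $u_s=u$ for all $s$, fails the hypotheses: by the local form (\ref{product2}), the restricted total space acquires a conifold (ordinary double point) singularity $x_iy_i=x_jy_j=u$ at every quadruple point of $X_0$, i.e.\ at every square of $\Gamma(X_0)$, so the one-parameter family is not semistable and the weight-filtration/dual-complex dictionary you rely on does not apply as stated. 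Your use of duality to rule out non-orientable surfaces is circular for the same reason: it presupposes the semistable one-parameter model whose existence is the issue. (Your first two paragraphs --- no free faces, links of vertices are circles via the anticanonical cycles $D_i$ --- are plausible and could be carried out following the classical arguments, but they are the easier half; and note that excluding $\mathbb{RP}^2$, say, needs $H^2(\Gamma;\bQ)\neq 0$ from maximal unipotency, again requiring the limit MHS machinery you haven't secured.)

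The paper's proof consists of exactly the reduction you are missing, and it is short: for $\dim_\bR\Gamma(X_0)\leq 1$ polync equals nc and one cites the classical classification \cite{kul, pp, prog}; in the $2$-dimensional case one pulls back the universal deformation along the diagonal $\Delta\hookrightarrow\Delta^S\times\Delta^T$, observes the conifold equations $x_iy_i=x_jy_j=u$ at squares and $x_iy_iz_i=u$ at triangles, and takes a small \emph{crepant} resolution $\wX_\Delta\to\Delta$ of the conifold points. This produces an honest one-parameter Kulikov model whose central fiber is an algebraic space, and --- the key observation --- the small resolution changes $\Gamma(X_0)$ only by subdividing each square into two triangles, so the homeomorphism type of the dual complex is preserved. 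At that point the entire statement, including the $\bS^2$/$\bT^2$ dichotomy and the K3/abelian identification of $X_t$, is the classical Kulikov--Persson--Pinkham theorem; no fresh link analysis, genus exclusion, or multiparameter limit-MHS theory is needed. If you want to salvage your blueprint, you should insert this base-change-and-small-resolution step before any appeal to weight filtrations; once it is in place, most of your remaining work is subsumed by the citations.
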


Observe that case (\ref{case1}) above occurs
exactly when $\Gamma(X_0)$ is $2$-dimensional, and cases (\ref{case2}, \ref{case3}) occur exactly
when $\Gamma(X_0)$ is $1$-dimensional.

\begin{proof}
We will reduce to the same statement
but for nc varieties. 
When $\dim_\bR \Gamma(X_0)\leq 1$,
nc and polync are the same,
and the result follows from 
\cite{kul, pp}, 
see also \cite[Table 1, p.~12]{prog}.

Consider the diagonal
$\Delta\hookrightarrow \Delta^S$
and then further embed
$$\Delta \hookrightarrow 
\Delta^S\simeq \Delta^S\times \{0\} \hookrightarrow 
\Delta^S\times \Delta^T$$
into the base of the universal
(log smooth) deformation. Let $X_\Delta\to \Delta$ be the pulled back deformation.
The local equation of the singularities of the universal family $X\to \Delta^S\times \Delta^T$ is given by (\ref{product2}).
The base change to $\Delta$
corresponds to setting
$u_s=u$ for all $s\in S$, where $u$
is a coordinate on $\Delta$ (and
$v_t=0$, for all $t\in T$).
It follows that the local
equation of $X_\Delta\to \Delta$ 
is \begin{align}\label{conifold}
x_iy_i=x_jy_j=u\end{align} for each
square of $\Gamma(X_0)$ 
colored by $s(i)\neq s(j)$,
and is $x_iy_iz_i=u$ for each triangle
of $\Gamma(X_0)$. Then analytically,
$X_\Delta\to \Delta$ admits a small,
crepant resolution $\wX_\Delta\to \Delta$
by taking either resolution of the
conifold singularity (\ref{conifold}).
The central
fiber $\wX_0$ is still an algebraic space,
and the resulting family 
$\wX_\Delta\to\Delta$ is a Kulikov model,
now over a $1$-parameter base.

Furthermore, the homeomorphism type of $\Gamma(X_0)$ is the same as 
$\Gamma(\wX_0)$, with the only difference
being that all squares in $\Gamma(X_0)$
have been subdivided into two triangles. 
The result now reduces to \cite{kul, pp, prog}, which implies the 
proposition for a $1$-dimensional base.
\end{proof}

The argument of Proposition \ref{homeo-type}
is quite general. It shows that the
homeomorphism type of the dual complex 
of a Kulikov model
is the same, whether one considers a
multiparameter degeneration, or a resolution of a $1$-parameter 
degeneration hitting
the origin. \smallskip

We now briefly overview how
to generalize the classical theory of
Kulikov models and their period map
\cite{fs, ae} to the polync setting.

\begin{definition}\label{num-dss} Let 
$X_0=\bigcup_i (V_i,\sum_j D_{ij})$ be an
$S$-colored, poly(s)nc
union of log CY surface pairs,
glued along isomorphisms
$D_{ij}\to D_{ji}$. 
We say that $X_0$ is {\it numerically
$d$-semistable} if $\textstyle {\cE xt}^1(\Omega^1_{X_0}, \cO_{X_0})\simeq 
\bigoplus_{s\in S}
\cL_s$ where $\cL_s\in {\rm Pic}^0((X_0)_{\rm sing}^s)$
are numerically trivial for all $s\in S$.
If additionally $\Gamma(X_0)\simeq \bS^2$,
we call $X_0$ a {\it poly(s)nc Type III K3 surface}.
And if $\Gamma(X_0)$ is an interval,
we call $X_0$ an {\it snc Type II K3 surface}.
\end{definition}

Note that any polync Type II K3 surface
is simply snc. For convenience, 
we will henceforth assume
that $X_0$ is polysnc and 
therefore, that polysimplices 
are embedded into $\Gamma(X_0)$, 
i.e.~not 
glued to themselves along faces. Most
of the results here 
continue to hold in the more
general setting, but for example,
formula (\ref{tp}) and Definition
\ref{period} below must be adjusted.

\begin{exercise}\label{exer}
Let $X_0$
be a polysnc, Type III K3 surface.
Then $X_0$ is numerically $d$-semistable
if for all irreducible
double curves $\bP^1\simeq
D_{ij}\subset V_i\cap V_j$, we have
\begin{align}
    \label{tp}
D_{ij}^2+D_{ji}^2=\threepartdefwhen{0}{\textrm{the associated edge of }\Gamma(X_0)\textrm{ lies on two squares},}{-1}{\textrm{the associated
edge of }\Gamma(X_0)\textrm{ lies on a square and a triangle},}{-2}{\textrm{the associated edge of }\Gamma(X_0)\textrm{  lies on two triangles}.}
\end{align}
Here by convention $D_{ij}\subset V_i$
and $D_{ji}\subset V_j$.
This formula is the polysnc
analogue of the classical
``triple point formula'' for $1$-parameter
Kulikov models \cite[p.~8]{prog}.
\end{exercise}

The condition that $X_0$ 
be numerically $d$-semistable
is a combinatorial condition
on $X_0$ in the sense that the locally
trivial deformations of $X_0$ do not
affect this property.

\begin{definition}\label{period}
The {\it numerically Cartier classes}
on a polysnc Type III K3 surface $X_0$ are
$$\widetilde{\Lambda}(X_0)\coloneqq \ker\left(\textstyle 
\bigoplus_i H^2(V_i,\bZ)\to \bigoplus_{i<j} H^2(D_{ij},\bZ)\right).$$

Following \cite[Sec.~3]{fs} or 
\cite[Sec.~4]{ae}, we may define
a {\it period map} $\varphi_{X_0}\colon \widetilde{\Lambda}(X_0)\to \bC^*$
as follows: Since each component $V_i$
is rational and each double locus
$D_{ij}\subset V_i\cap V_j$ is isomorphic
to $\bP^1$,
we have $H^2(V_i,\bZ)={\rm Pic}(V_i)$
and 
$H^2(D_{ij},\bZ)={\rm Pic}(D_{ij})$.

Let $\alpha = (\alpha_i)\in \widetilde{\Lambda}(X_0)$. Then each
$\alpha_i$ defines canonically
a line bundle $\cL_i\to V_i$
and the condition that $\alpha_i\cdot D_{ij}=\alpha_j\cdot D_{ji}$ implies
that we may glue $\cL_i$ and $\cL_j$
along $D_{ij}$. In this manner, we may
successively glue the $\cL_i$ together
until only one component $V_n$ remains.
Then this glued line bundle
$\cL\to \bigcup_{i\neq n} V_i\subset X_0$ 
and the line bundle $\cL_n\to V_n$ 
differ by some $$\textstyle
\varphi_{X_0}(\alpha)\in \bC^*
\simeq {\rm Pic}^0(\sum_j D_{nj})$$
upon restriction
to the anticanonical cycle 
$D_n=\sum_j D_{nj}\in |-K_{V_n}|$. 
The construction in Type II is similar, see
\cite[Constrs.~4.2, 4.3]{ae}.
\end{definition}

The element 
$\varphi_{X_0}(\alpha)\in \bC^*$ can be
checked to be independent of the choice
of a ``last'' component $V_n$. It only depends
on a choice of orientation of $\bS^2$,
which in turn orients the anticanonical
cycle on any component $V_i\subset X_0$
(the identification ${\rm Pic}^0(D_i)\simeq \bC^*$
requires a choice of orientation of the cycle
$D_i=\sum_j D_{ij}\subset V_i$ of rational curves).
We have an isomorphism 
$${\rm Pic}(X_0)\simeq 
\ker(\varphi_{X_0}).$$ 
That is, $\varphi_{X_0}$ measures
the obstruction to lifting
a numerically Cartier class to
a Cartier class.

Recall that $\Gamma(X_0)$ is an 
$S$-colored graph. Let
${\rm I}(X_0)$ be the {\it intersection
complex}, which is the polyhedral
decomposition of $\bS^2$ dual to $\Gamma(X_0)$.
The vertices of ${\rm I}(X_0)$
are $3$- and $4$-valent, and
the $1$-skeleton ${\rm I}(X_0)^{[1]}$
is also $S$-colored.
The color $s$ subgraph
${\rm I}_s(X_0)^{[1]}\subset 
{\rm I}(X_0)^{[1]}$ is a graph
with trivalent and bivalent vertices,
corresponding respectively
to triangles and squares
which use the color $s$.
See Figure \ref{fig:rhomb-dual}
for an example.

\begin{definition}\label{slab}
A {\it slab of color} $s\in S$ is a
union of components
$\bigcup_{i\in G} V_i$ corresponding
to a connected component of ${\rm I}(X_0)\setminus {\rm I}_s(X_0)^{[1]}$.
\end{definition}

\begin{remark}
The notion of a slab generalizes
to $S$-colored, polysnc varieties
of any dimension, by considering
the codimension $1$ faces
of ${\rm I}(X_0)$ dual to edges
in $\Gamma(X_0)^{[1]}$ of color $s$.
\end{remark}

\begin{example}
If $X_0$ is snc and $S= \{s\}$,
then the slabs of color $s$ are just
the irreducible components $V_i\subset X_0$
because every codimension
$1$ cell of ${\rm I}(X_0)$ 
has color $s$,
and so the connected components
in Definition \ref{slab} are the
maximal dimension cells of ${\rm I}(X_0)$.
\end{example}

It follows from
the local form of a semistable morphism
(\ref{product2}) that the slabs of color $s$
deform to the irreducible components
over the coordinate hyperplane $V(u_s)\subset \Delta^S\times \Delta^T$,
because the singular loci of every
other color $s'\neq s$ are smoothed.

On the total space $X$, we have 
a line bundle $\cL_G\coloneqq \cO_X(V_G)$
where $V_G$ is the
irreducible component over $V(u_s)$
corresponding to the slab indexed by $G\subset \{\textrm{components of }X_0\}$.
Thus the condition
that $X_0$ is smoothable 
implies that $\cO_X(G)\vert_{X_0}$
defines an element of ${\rm Pic}(X_0)$.
The numerical Cartier class of $\cO_X(V_G)\vert_{X_0}$ is easy
to compute; it is $$\xi_G = \!\!\!\!\!
\sum_{\substack{\textrm{oriented edges }e_{ij}\textrm{ of} \\
\textrm{color }s \textrm{ bounding }G}}
\!\!\!\!\!
[D_{ji}] - [D_{ij}]\in  \widetilde{\Lambda}(X_0).$$
So to be smoothable, we certainly need
$\varphi_{X_0}(\xi_G)=1$ for any
slab $G$. And conversely:

\begin{proposition} \label{dss-criterion}
Let $X_0$ be a polysnc 
Type III K3 surface. Then $X_0$
is $d$-semistable if and only if
$\varphi_{X_0}(\xi_G)=1$ for all
slabs $G$ of all colors $s\in S$.
\end{proposition}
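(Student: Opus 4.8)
The plan is to characterize $d$-semistability of a polysnc Type III K3 surface $X_0$ in terms of the triviality of the line bundles $\cL_s = \cE xt^1(\Omega^1_{X_0},\cO_{X_0})|_{(X_0)^s_{\rm sing}}$, and to translate that triviality into the period-map condition $\varphi_{X_0}(\xi_G)=1$. The forward direction is already half-established by the paragraph preceding the statement: if $X_0$ is $d$-semistable then it is smoothable (by Theorem~\ref{smoothable}, since $\omega_{X_0}\simeq\cO_{X_0}$ and the components are algebraic), and for each slab $G$ of color $s$ one has the line bundle $\cO_X(V_G)$ on the total space restricting to an honest element of $\Pic(X_0)$, whose numerically Cartier class is $\xi_G$. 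Since $\Pic(X_0)\simeq\ker(\varphi_{X_0})$, this forces $\varphi_{X_0}(\xi_G)=1$. The substance of the proposition is therefore the converse.

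**For the converse**, I would argue color by color: by the direct-sum decomposition $\cE xt^1(\Omega^1_{X_0},\cO_{X_0})\simeq\bigoplus_{s\in S}\cL_s$, $d$-semistability amounts to showing $\cL_s\simeq\cO_{(X_0)^s_{\rm sing}}$ for each individual $s$. Since $X_0$ is numerically $d$-semistable (which I would first verify is implied by, or can be packaged with, the Type III K3 hypothesis via the triple-point formula of Exercise~\ref{exer}), each $\cL_s$ already lies in $\Pic^0((X_0)^s_{\rm sing})$, so it suffices to show $\cL_s$ is trivial in $\Pic^0$. The key geometric fact is that $(X_0)^s_{\rm sing}$ is a disjoint union of cycles of $\bP^1$'s, one cycle for each connected ``color-$s$ locus,'' and $\Pic^0$ of a cycle of rational curves is $\bC^*$. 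The line bundle $\cL_s$ restricted to each such cycle is measured by precisely the same gluing/monodromy construction that defines $\varphi_{X_0}$: going around the cycle, the transition data multiply to an element of $\bC^*$. The claim to establish is that this $\bC^*$-invariant, for the cycle bounding a slab $G$, equals $\varphi_{X_0}(\xi_G)$ — so that its vanishing is exactly the hypothesis.

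**The technical heart** is thus identifying $\cL_s|_{\rm cycle}\in\Pic^0\simeq\bC^*$ with $\varphi_{X_0}(\xi_G)$. Here I would exploit the explicit local model: along the open stratum of color $s$ the surface looks like $\{x_i y_i = 0\}\times\bC$, and $\cL_s$ is generated by the section $\partial/\partial u_s$ of the Künneth factor in \eqref{ext}, so the obstruction to globalizing a nowhere-vanishing section of $\cL_s$ around the cycle is a product of transition units on the double curves $D_{ij}$. The slab $G$ of color $s$ separates $\bS^2\simeq\mathrm{I}(X_0)$ along the color-$s$ subgraph $\mathrm{I}_s(X_0)^{[1]}$, and the oriented edges $e_{ij}$ bounding $G$ are precisely the double curves where the color-$s$ transition units are read off; the formula $\xi_G=\sum([D_{ji}]-[D_{ij}])$ is the linearization of this product. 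I would match the two expressions by comparing the cocycle defining $\cL_s$ (from the proof that $d$-semistability is equivalent to a log structure of semistable type) with the gluing cocycle in Definition~\ref{period}, checking that the ``last component'' ambiguity in $\varphi_{X_0}$ corresponds to the choice of base point on the cycle.

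**The main obstacle** I anticipate is bookkeeping the orientations and the precise matching of transition units: ensuring that the sign conventions $D_{ij}\subset V_i$ versus $D_{ji}\subset V_j$, the orientation of the anticanonical cycle induced by $\bS^2$, and the orientation of the color-$s$ cycle all line up so that $\cL_s|_{\rm cycle}$ and $\varphi_{X_0}(\xi_G)$ agree on the nose (not merely up to inversion). Once this identification is pinned down, the equivalence is immediate: $\cL_s$ is trivial on every color-$s$ cycle exactly when $\varphi_{X_0}(\xi_G)=1$ for every slab $G$ of color $s$, and running over all $s\in S$ gives $d$-semistability.
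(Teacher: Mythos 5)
The paper gives no argument here --- its ``proof'' reads \emph{Omitted; the proof is similar to the snc case} --- so your proposal can only be measured against the intended snc-style argument, and in outline it matches it: the forward direction is exactly the paragraph preceding the proposition, and the converse is indeed meant to be a cocycle comparison between $\cL_s$ and $\varphi_{X_0}$. But your converse contains a concrete error: the ``key geometric fact'' that $(X_0)_{\rm sing}^s$ is a disjoint union of cycles of $\bP^1$'s is false whenever some triangle of $\Gamma(X_0)$ carries the color $s$. A color-$s$ triangle is an nc triple point at which \emph{three} color-$s$ double curves meet, so the color-$s$ locus acquires trivalent points; its dual graph is ${\rm I}_s(X_0)^{[1]}$, with valence-$3$ vertices at triangles and valence-$2$ vertices at squares (see the proof of Proposition \ref{params}). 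By the Gauss--Bonnet count $n_s = 2+\tfrac{1}{2}n_{\triangle,s}$ there, your picture applies only when $n_s=2$; it already fails in the snc case $|S|=\{s\}$, where $(X_0)_{\rm sing}^s$ is the entire trivalent double locus, and for the red color in the rhombicuboctahedron example. Consequently ``one $\bC^*$-invariant per connected component'' is the wrong count: a connected component of $(X_0)_{\rm sing}^s$ has $\Pic^0\simeq(\bC^*)^{b_1}$, where $b_1$ is the first Betti number of its dual graph, and $b_1>1$ in general.

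The repair keeps your mechanism but replaces components of $(X_0)_{\rm sing}^s$ by slab boundaries. Numerical triviality of $\cL_s$ is built into Definition \ref{num-dss} (it is part of the definition of a polysnc Type III K3 surface, so there is nothing to ``verify'' via Exercise \ref{exer}); hence $\cL_s\in\Pic^0((X_0)_{\rm sing}^s)\simeq H^1({\rm I}_s(X_0)^{[1]},\bC^*)$, and $\cL_s$ is trivial if and only if its monodromy is trivial around \emph{every} cycle of ${\rm I}_s(X_0)^{[1]}$, not merely one per component. The boundaries of the slabs of color $s$ generate $H_1({\rm I}_s(X_0)^{[1]},\bZ)$, with the single relation $\sum_G \xi_G = 0$, precisely because ${\rm I}_s(X_0)^{[1]}$ is embedded in $\bS^2$ (Alexander duality; equivalently $b_1 = n_s-1$, matching the count in Proposition \ref{params}) --- this is where the Type III hypothesis $\Gamma(X_0)\simeq\bS^2$ enters, a point your outline never isolates. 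With that substitution, your identification of the slab-boundary monodromy of $\cL_s$ with $\varphi_{X_0}(\xi_G)$, via the transition units from the log-structure construction against the gluing cocycle of Definition \ref{period}, goes through as in the snc case of \cite{fs}, and the orientation bookkeeping you flag is genuinely the only remaining check. Your forward direction is fine as stated (and is the paper's own remark), though it silently uses the hypotheses of Theorem \ref{smoothable}; alternatively it follows from the same cocycle identification without invoking smoothability at all.
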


\begin{proof} When $S=\{s\}$, this
is \cite[Prop.~4.4]{ae}.
Fix a color $s\in S$ and set
$C_s\coloneqq (X_0)_{\rm sing}^s$.
Since $X_0$ is numerically
$d$-semistable, $\cL_s$ has degree zero
on every irreducible component of $C_s$.
As these components are rational,
$\cL_s$ is determined by its gluing
monodromy, which defines a character
$$
H_1({\rm I}_s(X_0)^{[1]},\bZ)\to\bC^*.
$$
The same computation as in the snc case
shows that the value of this character
on the oriented boundary of a slab $G$
is $\varphi_{X_0}(\xi_G)$.
The oriented boundaries of the slabs
generate
$H_1({\rm I}_s(X_0)^{[1]},\bZ)$.
Consequently, $\cL_s$ is trivial if and
only if $\varphi_{X_0}(\xi_G)=1$ for
every slab $G$ of color $s$.
Applying this independently for every
$s\in S$ proves the result.
\end{proof}

\begin{remark} If $X_0$ is smoothable,
then by base changing
to a $1$-parameter family and resolving
to a $1$-parameter Kulikov model
as in Proposition \ref{homeo-type}, we 
get a morphism $\wX_0\to X_0$
which defines a bijection on components.
For two of 
the irreducible components $\wV_i\to V_i$ 
corresponding to a vertex 
$v_i\in \Gamma(X_0)^{[0]}$
adjacent to a square, this
modification restricts
to the blow-up of a node of the anticanonical
cycle $D_i=\sum_j D_{ij}\in |-K_{V_i}|$.

Thus, via pullback, we get an inclusion
$\iota\colon \widetilde{\Lambda}(X_0)
\hookrightarrow
\widetilde{\Lambda}(\wX_0)$
and it is easily seen from Definition
\ref{period} that $\varphi_{X_0} = \varphi_{\wX_0}\circ \iota$. 
So the period $\varphi_{X_0}$
can be computed via the resolution
to a Kulikov model
of a $1$-parameter base change.
\end{remark}

\begin{corollary}\label{def-to-dss}
Any polysnc Type III K3 surface $X_0$ admits
a locally trivial deformation to one
which is $d$-semistable (and in 
particular, smoothable).
\end{corollary}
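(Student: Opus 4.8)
The plan is to combine Proposition \ref{dss-criterion} with a surjectivity statement for the periods of re-gluing deformations. Since a polysnc Type III K3 surface is by definition numerically $d$-semistable, each $\cL_s\in{\rm Pic}^0((X_0)^s_{\rm sing})$, so all periods $\varphi_{X_0}(\xi_G)\in\bC^*$ are defined; by Proposition \ref{dss-criterion} it suffices to exhibit a locally trivial deformation $X_0'$ of $X_0$ with $\varphi_{X_0'}(\xi_G)=1$ for every slab $G$ of every color. The locally trivial deformations I would use are re-gluings: for a double curve $\bP^1\simeq D_{ij}$, the automorphisms of $\bP^1$ fixing the two nodes at which $D_{ij}$ meets the rest of the anticanonical cycle $D_i$ form a group $\bC^*$, and re-gluing the identification $D_{ij}\to D_{ji}$ by $t_e\in\bC^*$ is a locally trivial deformation: it changes neither the components $V_i$, nor $\Gamma(X_0)$, nor the degrees of the $\cL_s$, and it preserves $\omega_{X_0}\simeq\cO_{X_0}$ because the residue $\tfrac{dz}{z}$ of a log canonical form along $D_{ij}$ is invariant under $z\mapsto t_e z$. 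These re-gluings give a family of locally trivial deformations parametrized by one copy of $\bC^*$ per double curve.

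First I would compute how a re-gluing moves the periods. Fixing a color $s$ and tracing the gluing construction of Definition \ref{period}, twisting along a color-$s$ edge $e$ by $t_e$ multiplies $\varphi_{X_0}(\xi_G)$ by $t_e^{\pm1}$ for the two slabs $G$ bounded by $e$, with opposite exponents on the two sides, since $e$ enters $\xi_{G}=\sum_{e\text{ bounds }G}[D_{ji}]-[D_{ij}]$ with opposite orientations when viewed from either slab, and fixes $\varphi_{X_0}(\xi_{G'})$ for every slab $G'$ not bounded by $e$. Summing $\xi_G$ over all slabs of a fixed color, each color-$s$ edge appears with both orientations and cancels, so $\sum_G\xi_G=0$ and hence $\prod_G\varphi_{X_0}(\xi_G)=1$. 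Thus the target values $c_G:=\varphi_{X_0}(\xi_G)^{-1}$ have trivial total product for each color.

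It then remains to solve $\prod_{e\text{ bounds }G}t_e^{\pm1}=c_G$ for the twists. Organizing this on the dual graph $\Gamma^*_s\subset\bS^2$, whose vertices are the slabs of color $s$ and whose edges are the color-$s$ double curves separating two distinct slabs, the left-hand side is exactly the graph boundary map applied to the $\bC^*$-valued $1$-chain $(t_e)$, so I must realize the $0$-chain $(c_G)$ as a boundary. Because $\bS^2$ is connected the dual graph $\Gamma^*_s$ is connected, and on a connected graph every $0$-chain with trivial total product is a boundary: pick a spanning tree, set the off-tree $t_e=1$, and solve along the tree from the leaves inward; the finitely many color-$s$ edges whose two sides lie in one slab contribute to no $\xi_G$ and are set to $t_e=1$. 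Performing this independently for each color, which is legitimate since the color data do not interact, yields a locally trivial deformation $X_0'$ with all slab periods equal to $1$, hence $d$-semistable by Proposition \ref{dss-criterion}. As $X_0'$ is then a proper, $d$-semistable, polysnc surface with $\omega_{X_0'}\simeq\cO_{X_0'}$, Theorem \ref{smoothable} together with Proposition \ref{homeo-type} shows it is smoothable with K3 general fiber.

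The main obstacle is the period computation underlying the second paragraph: checking that re-gluing along $D_{ij}$ by $t_e$ changes $\varphi_{X_0}$ precisely by the claimed coboundary, with the correct orientation conventions. This is a local calculation in the clutching description of Definition \ref{period}, parallel to the one-parameter period computations of \cite{fs, ae}. A secondary point I build into the argument rather than finesse is that a slab need not be a disk when ${\rm I}_s(X_0)^{[1]}$ is disconnected; this is exactly why I invert the coboundary on the dual graph $\Gamma^*_s$, using only its connectedness, instead of appealing to a cellular cochain sequence that would demand the slabs be cells.
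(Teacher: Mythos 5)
There is a genuine gap, and it sits exactly where you flagged the ``main obstacle'': the period-variation formula in your second paragraph is false. Re-gluing the identification $D_{ij}\to D_{ji}$ by $t_e\in\bC^*$ changes $\varphi_{X_0}(\alpha)$ by $t_e^{\deg(\alpha\vert_{D_e})}$, where $\deg(\alpha\vert_{D_e})=\alpha_i\cdot D_{ij}=\alpha_j\cdot D_{ji}$ is the common degree: an automorphism $z\mapsto t z$ of $\bP^1$ fixing the two nodes rescales the gluing data of a degree-$d$ line bundle by $t^{\pm d}$, so it is the degree, not the incidence of $e$ with the slab, that governs the exponent. This is neither $\pm1$ on bounding edges nor $0$ on non-bounding ones. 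Concretely, take the tetrahedral snc example ($S=\{s\}$, slabs $=$ components): $\xi_1$ has component $[D_{21}]$ on $V_2$, and since $D_{21}\cdot D_{23}=1$ at the triple point, re-gluing along the edge $\{2,3\}$ --- which does not bound the slab $\{V_1\}$ --- multiplies $\varphi(\xi_1)$ by $t_e^{1}$; while on a bounding edge, $\deg(\xi_1\vert_{D_{12}})=K_{V_1}\cdot D_{12}=-2-D_{12}^2=D_{21}^2$ by the triple point formula (Exercise \ref{exer}), which is $\pm1$ only coincidentally. For the same reason your ``colors do not interact'' claim fails at the level of periods: at a square colored $\{s,s'\}$, a color-$s'$ slab class $\xi_{G'}$ has nonzero degree on the adjacent color-$s$ double curve, so color-$s$ re-gluings move color-$s'$ periods. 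Consequently the system you must solve is not the $\bC^*$-valued coboundary equation on the slab-adjacency graph $\Gamma^*_s$, the spanning-tree inversion does not apply, and the per-color decoupling is unavailable. What you actually face is the integer matrix $M_{G,e}=\deg(\xi_G\vert_{D_e})$ over \emph{all} edges and all slabs; by divisibility of $\bC^*$, re-gluings realize the target $(c_G)$ if and only if $\varphi_{X_0}$ is already trivial on every integral combination $\sum_G a_G\xi_G$ that is numerically trivial on all double curves. The only relations you verify are the per-color sums $\sum_G\xi_G=0$, but nothing in your argument excludes further elements of $\ker(M)$ that are nonzero in $\widetilde{\Lambda}(X_0)$, whose periods re-gluings cannot change; handling them would require the other locally trivial deformations (moving the non-toric blow-up points), which your proposal never invokes.

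By contrast, the paper avoids this analysis entirely by reducing to the known snc case: subdivide every square of $\Gamma(X_0)$ into two triangles via corner blow-ups to obtain an snc Type III surface $\wX_0$ (checked against Exercise \ref{exer}), invoke the ``standard blow-ups and gluings'' of \cite{ghk1}, \cite{ae} --- which use both gluing \emph{and} blow-up-point parameters --- to find a $d$-semistable locally trivial deformation $\wY_0$ with $\varphi_{\wY_0}=1$, smooth it, and then contract the exceptional $(-1)\times(-1)$ double curves in the total space $\wY_\Delta\to\Delta$; the central fiber $Y_0$ of the contraction is a locally trivial deformation of $X_0$, and its $d$-semistability follows from Proposition \ref{dss-criterion} together with $\varphi_{Y_0}=\varphi_{\wY_0}\vert_{\widetilde{\Lambda}(Y_0)}$. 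Your direct strategy is attractive and could likely be repaired along these lines --- replace the incidence matrix by $M_{G,e}$, prove the needed statement about $\ker(M)$ or add the blow-up-point deformations to the parameter space --- but as written, the central computation is incorrect and the surjectivity claim it feeds is unproven.
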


\begin{proof}
Build an snc surface $\wX_0\to X_0$
by subdividing every square of $\Gamma(X_0)$
into two triangles, performing
the corresponding corner blowups on
components of $X_0$. Then $\wX_0$
is an snc
Type III K3 surface by Exercise
\ref{exer}. Thus,
there is a locally
trivial deformation $\wY_0$ of $\wX_0$
which is $d$-semistable,
e.g.~by taking the 
``standard blow-ups and gluings'' 
\cite[Lem.~2.8]{ghk1},
\cite[Prop.~7.11]{ae} for which $\varphi_{\wY_0}=1$.
Let $\wY_\Delta\to \Delta$ be a smoothing
of $\wY_0$. The diagonal subdivision
of each square corresponds to a double
curve $E_{ij}\subset \wY_0$ satisfying
$E_{ij}^2=
E_{ji}^2=-1$. 

Thus the
$E_{ij}$ are analytically 
contractible in the 
total space $\wY_\Delta\to \Delta$.
We let $Y_\Delta\to \Delta$ be the 
contraction, and $Y_0$ be its central fiber.
We have that $Y_0$ is a locally trivial
deformation of $X_0$. Furthermore,
$Y_0$ is $d$-semistable by
Proposition \ref{dss-criterion},
the fact that $\wY_0$ is, and 
the identity $\varphi_{Y_0} = \varphi_{\wY_0}\vert_{\widetilde{\Lambda}(Y_0)}$ on period homomorphisms.
\end{proof}

Corollary \ref{def-to-dss} grants
the existence of a {\it smoothable}
polysnc Type III K3 surface, within
the locally trivial deformation class 
of a fixed numerically 
$d$-semistable one. This justifies
why the examples in Section
\ref{examples} are smoothable.

We now make some basic
parameter counts,
to determine the dimension
of the space of locally trivial,
$d$-semistable deformations.
First, we have the
``conservation of charge'':

\begin{proposition}\label{charge}
Let $X_0$ be a polync Type III (or II) K3 surface.
Then $ \sum_i Q(V_i,\sum_j D_{ij})=24$
where $Q(V,D)\coloneqq \chi_{\rm top}(V\setminus D)$ is the ``charge'' of $(V,D)$.
\end{proposition}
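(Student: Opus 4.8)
The plan is to compute the topological Euler characteristic $\chi_{\rm top}(X_t)$ of a general fiber of a smoothing in two ways. On one hand, $X_t$ is a K3 surface, so $\chi_{\rm top}(X_t)=24$. On the other hand, I will show $\chi_{\rm top}(X_t)=\sum_i \chi_{\rm top}((V_i)^\circ)$, where $(V_i)^\circ = V_i\setminus \sum_j D_{ij}$; since $Q(V_i,\sum_j D_{ij})=\chi_{\rm top}((V_i)^\circ)$ by definition, this yields the claim.

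First I would reduce to the smoothable case. The quantity $\sum_i Q(V_i,\sum_j D_{ij}) = \sum_i\big(\chi_{\rm top}(V_i)-\chi_{\rm top}(\textstyle\sum_j D_{ij})\big)$ is invariant under locally trivial deformation of $X_0$, since the components and their anticanonical cycles then deform in topologically trivial families and $\chi_{\rm top}$ is a deformation invariant. By Corollary \ref{def-to-dss}, $X_0$ admits a locally trivial deformation to a $d$-semistable surface, so I may assume $X_0$ is $d$-semistable, hence smoothable. As $X_0$ is a union of log CY pairs glued along anticanonical divisors with no remaining boundary, $\omega_{X_0}\simeq\cO_{X_0}$, and Proposition \ref{homeo-type} (the case $\Gamma(X_0)\simeq\bS^2$) applies: the general fiber of the universal deformation is a K3 surface. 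Restricting to a general one-parameter disk transverse to the locally trivial locus gives a smoothing $X_\Delta\to\Delta$ with $\chi_{\rm top}(X_t)=24$.

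Second I would compute $\chi_{\rm top}(X_t)$ from the stratification of $X_0$. For $t\neq 0$ the fiber $X_t$ admits a Clemens-type retraction onto $X_0$ which, over the open part $(X_\sigma)^\circ$ of each closed stratum, is a fiber bundle whose fiber is the local Milnor fiber $F_\sigma$ of the semistable local model (\ref{product2}); this local constancy is precisely the content of the product form along $(X_\sigma)^\circ$. By additivity and multiplicativity of $\chi_{\rm top}$ one gets
$$\chi_{\rm top}(X_t)=\sum_\sigma \chi_{\rm top}\big((X_\sigma)^\circ\big)\,\chi_{\rm top}(F_\sigma).$$
The key local input is that $\chi_{\rm top}(F_\sigma)=0$ on every positive-codimension stratum: an open double curve has model $xy=t$ with $F_\sigma\simeq\bS^1$; a triple point (triangle) has model $xyz=t$ with $F_\sigma$ a real $2$-torus; and a quadruple point (square) has the conifold model (\ref{conifold}) with $F_\sigma\simeq\bC^*\times\bC^*$ — each of Euler characteristic $0$. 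Only the top strata $(V_i)^\circ$, where the family is smooth and $F_\sigma$ is contractible, contribute, each with $\chi_{\rm top}(F_\sigma)=1$. Hence $\chi_{\rm top}(X_t)=\sum_i\chi_{\rm top}((V_i)^\circ)$, and combining with the first computation gives $\sum_i Q(V_i,\sum_j D_{ij})=24$.

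The step that will need the most care is this stratum-wise Euler-characteristic bookkeeping: verifying that the nearby-fiber retraction is genuinely a locally trivial fibration over each open stratum with the asserted Milnor fiber, and that every positive-codimension stratum — including the square/conifold strata peculiar to the polysnc setting — contributes zero. Equivalently, one may phrase the argument through the nearby-cycle complex, whose stalkwise Euler characteristic vanishes off the top strata, and invoke additivity of the compactly supported Euler characteristic (which coincides with $\chi_{\rm top}$ for complex varieties). The reduction to the $d$-semistable case is genuinely needed: for a non-$d$-semistable central fiber (e.g.~a naive union of planes) the triple-point formula of Exercise \ref{exer} fails and the total space is singular along the double locus, so the count would not hold.
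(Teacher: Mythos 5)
Your proof is correct and is essentially the paper's own second argument: deform to a $d$-semistable (hence smoothable) surface via Corollary \ref{def-to-dss}, identify the general fiber as a K3 via Proposition \ref{homeo-type}, and compute $\chi_{\rm top}(X_t)=\sum_i\chi_{\rm top}(V_i\setminus D_i)$ by the multivariable Clemens collapse, whose fibers over codimension-$r$ strata are $(\bS^1)^r$-bundles of Euler characteristic zero --- exactly your stratum-wise Milnor-fiber bookkeeping, including the square/conifold stratum with fiber homotopy equivalent to $(\bS^1)^2$. The paper also records a second, even quicker route (reducing to the snc one-parameter case of Friedman--Morrison by corner blow-ups, which preserve each charge $Q(V_i,D_i)$), but your argument needs no such reduction.
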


\begin{proof}
Two simple proofs are as follows:
We can reduce to the snc case, by
taking an nc ``resolution''
$\wX_0\to X_0$ as in Proposition
\ref{homeo-type} or Corollary 
\ref{def-to-dss}. Then the log CY
components
$(\wV_i,\wD_i)\subset \wX_0$ 
are all corner blow-ups
of the components 
$(V_i,D_i)\subset X_0$
and thus 
$\chi_{\rm top}(\wV_i\setminus \wD_i)=\chi_{\rm top}(V_i\setminus D_i)$.
So the formula follows from the $1$-parameter
case \cite[Prop.~3.7]{fm}.

Alternatively, we note
that after a topologically trivial deformation, $X_0$ is smoothable
by Corollary \ref{def-to-dss}. 
Let $X_t$ be the smooth general
fiber. It is a K3 surface
by Proposition \ref{homeo-type}.
Then by \cite[Sec.~5.2]{survey},
there is a multivariable Clemens
collapsing map $c_t\colon X_t\to X_0$
which is a $(\bS^1)^r$-bundle over the 
codimension $r$ singular strata
of $X_0$. It follows from additivity 
and  multiplicativity
of the Euler characteristic
that $24 = \chi_{\rm top}(X_t)=
\sum_i \chi_{\rm top}(V_i\setminus D_i)$.
\end{proof}

\begin{proposition}\label{params}
Let $X_0$ be a $d$-semistable
polysnc Type III K3 surface. The naive
parameter count
of the space of $d$-semistable,
locally trivial deformations is
$20-|S|.$
\end{proposition}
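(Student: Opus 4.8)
The plan is to compute the dimension of the space of locally trivial, $d$-semistable deformations as a difference of two terms: the total number of moduli parameters for the log CY surface pairs making up $X_0$, minus the number of constraints imposed by $d$-semistability. First I would parametrize the locally trivial deformations of $X_0$. These deformations preserve the combinatorial type $\Gamma(X_0)$ and deform each log CY pair $(V_i, D_i)$ together with the gluing isomorphisms $D_{ij}\to D_{ji}$. Since each $V_i$ is a rational surface with an anticanonical cycle $D_i$ of rational curves, the relevant deformations are of the Looijenga pairs, and the period map $\varphi_{X_0}$ from Definition \ref{period} records the essential moduli. The natural count is to take $H^1(X_0, T_{X_0}^{\rm lt})$ for the sheaf of locally trivial deformations, but I expect it is cleaner to count via the target $\bC^*$ of the period map: the space of all period data is the character group of $\widetilde{\Lambda}(X_0)$, i.e.~$\Hom(\widetilde{\Lambda}(X_0), \bC^*)$, and the $d$-semistable locus is the fiber $\varphi_{X_0}^{-1}(1)$ cut out by the slab conditions.

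The key computation is the rank of $\widetilde{\Lambda}(X_0)$. By Definition \ref{period}, this is the kernel of $\bigoplus_i H^2(V_i,\bZ)\to \bigoplus_{i<j}H^2(D_{ij},\bZ)$. Using that $H^2(V_i,\bZ)=\Pic(V_i)$ and $H^2(D_{ij},\bZ)=\Pic(D_{ij})=\bZ$, I would compute $\rk \widetilde{\Lambda}(X_0)$ by an Euler-characteristic / alternating-sum argument over the dual complex $\Gamma(X_0)\simeq \bS^2$, relating the ranks of the $\Pic(V_i)$ to the combinatorics of components, double curves, and triple points. Here Proposition \ref{charge} (conservation of charge, $\sum_i Q(V_i,D_i)=24$) is the essential input: the charge $Q(V_i,D_i)=\chi_{\rm top}(V_i\setminus D_i)$ relates the Picard rank of each component to its topology, and summing over all components, the triangle-squarulation of $\bS^2$ forces the total to produce $\rk \widetilde{\Lambda}(X_0)=20$. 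This is the polysnc analogue of the classical count for $1$-parameter Type III Kulikov models, where the period domain has dimension $20$, matching $h^{1,1}=20$ of a K3 surface.

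Next I would count the $d$-semistability constraints. By Proposition \ref{dss-criterion}, $d$-semistability is equivalent to $\varphi_{X_0}(\xi_G)=1$ for all slabs $G$ of all colors $s\in S$. The classes $\xi_G\in \widetilde{\Lambda}(X_0)$ are indexed by slabs, but they are subject to relations: for each color $s$, the slabs partition (a quotient of) the components, and their defining classes $\xi_G$ sum to zero over all slabs of that color (since each oriented edge of color $s$ contributes to exactly two adjacent slabs with opposite signs). Thus the number of independent period conditions of color $s$ equals one less than the number of slabs of color $s$, and I would argue that the span of $\{\xi_G\}$ inside $\widetilde{\Lambda}(X_0)$ has rank exactly $|S|$. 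Intuitively, each color contributes one independent constraint transverse to the others, matching the fact that smoothing in the $u_s$ direction corresponds to deforming the slabs of color $s$ into single components. This gives $20 - |S|$ as the naive dimension of $\varphi_{X_0}^{-1}(1)$.

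The main obstacle will be pinning down precisely that the slab classes $\{\xi_G\}_{G,s}$ span a rank-$|S|$ subspace of $\widetilde{\Lambda}(X_0)$, neither more (which would over-count constraints) nor less (under-count). The danger is hidden relations among slab classes across different colors, or the failure of the $\xi_G$ to be independent modulo the per-color summation relation. I would handle this by exhibiting, for each color $s$, a canonical surjection recording the "number of slabs of color $s$" and checking that the induced map on period data is the pullback along the slab-contraction described after Definition \ref{slab}; the slabs of color $s$ deforming to irreducible components over $V(u_s)$ shows that exactly one independent linear condition per color survives. Because the statement is explicitly flagged as a \emph{naive} parameter count, I would emphasize that this is the expected dimension assuming the period conditions are independent and impose exactly $|S|$ constraints, deferring the verification of genericity/transversality (which is where genuine subtleties could arise) rather than claiming a rigorous equality of moduli dimensions.
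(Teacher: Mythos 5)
There is a genuine gap: two of your key intermediate claims are false, and they happen to cancel. First, $\rk\widetilde{\Lambda}(X_0)$ is \emph{not} $20$. Assuming the restriction map $\bigoplus_i H^2(V_i,\bQ)\to \bigoplus_{i<j}H^2(D_{ij},\bQ)$ is surjective (the caveat the paper states after the proposition), one has $\rk\widetilde{\Lambda}(X_0)=\sum_i \rho(V_i)-e$; since each $(V_i,D_i)$ is a rational pair with an anticanonical cycle of $k_i$ curves, $\rho(V_i)=Q(V_i,D_i)+k_i-2$, so conservation of charge plus $\sum_i k_i=2e$ gives $\rk\widetilde{\Lambda}(X_0)=24+e-2v$, which by Euler's formula $v-e+n_\triangle+n_\square=2$ and the incidence count $3n_\triangle+4n_\square=2e$ equals $20+\tfrac{1}{2}n_\triangle$. (Check: the tetrahedral snc surface has four components of Picard rank $7$ and $e=6$, so rank $22=20+\tfrac{1}{2}\cdot 4$, not $20$.) Second, the slab classes do \emph{not} span a rank-$|S|$ subspace; the naive count of independent conditions is $\sum_{s\in S}(n_s-1)$, and your own text asserts this in the previous sentence, so your proposal contradicts itself whenever some $n_s>2$, i.e.~whenever triangles occur. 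In the paper's rhombicuboctahedron example there are six red slabs and two slabs of each other color, giving $5+1+1+1=8$ conditions inside a rank-$24$ lattice ($24-8=16$, matching the paper's $\Delta^{16}$), whereas your claims would give $4$ conditions inside a rank-$20$ lattice. Your final answer $20-|S|$ is right only because both errors are off by exactly $\tfrac{1}{2}n_\triangle$; fixing either one alone yields the wrong dimension, so the argument as written would not survive honest execution.

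The missing ingredient is precisely the combinatorial step your proposal never performs: relating the number of slabs to the number of triangles. The paper observes that the color-$s$ subgraph ${\rm I}_s(X_0)^{[1]}$ has only bivalent vertices (squares using $s$) and trivalent vertices (triangles of color $s$), so Gauss--Bonnet on $\bS^2$ forces $n_s=2+\tfrac{1}{2}n_{\triangle,s}$, whence $\sum_{s}(n_s-1)=|S|+\tfrac{1}{2}n_\triangle$; this is exactly the excess of $\rk\widetilde{\Lambda}(X_0)$ over $20$, and the cancellation is what produces $20-|S|$. Aside from this, your period-map framing is a legitimate and essentially equivalent packaging of the paper's argument: the paper counts $e$ gluing parameters plus $24$ non-toric blow-up positions modulo $2v$ reparametrizations, which is a coordinatization of $\Hom(\widetilde{\Lambda}(X_0),\bC^*)$, and then subtracts the slab conditions via Proposition \ref{dss-criterion} just as you propose. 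So the route is salvageable, but you must replace the two asserted ranks by $20+\tfrac{1}{2}n_\triangle$ and $|S|+\tfrac{1}{2}n_\triangle$ respectively, with the Gauss--Bonnet computation supplying the bridge.
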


\begin{proof}
The parameter count of the space of all locally
trivial deformations, possibly not
$d$-semistable, is given by
$e+24-2v$ where $e$ is the number
of edges of $\Gamma(X_0)$ and $v$
is the number of vertices. This is 
because there is a $\bC^*$-worth 
of regluings
$D_{ij}\to D_{ji}$
 of each double
curve,
each non-toric
blow-up of a toric pair 
varies in a copy of $\bC^*$,
and the number of non-toric
blow-ups is $\sum_i Q(V_i,D_i)=24$ by
Proposition \ref{charge}.
This gives $e+24$ parameters,
but we must quotient by 
re-parameterizations, corresponding to the automorphism groups $(\bC^*)^2$
of toric models $(\oV_i,\oD_i)$
of each component $(V_i,D_i)$,
see e.g.~\cite[Sec.~7.2]{ae}.

Imposing $d$-semistability then
amounts to $ \sum_{s\in S} (n_s-1)$
conditions, where $n_s$ is the number
of slabs of color $s$, i.e.~the number
of connected components of 
${\rm I}(X_0)\setminus {\rm I}_s(X_0)^{[1]}$. 
We subtract $1$
from $n_s$
because the sum of all slab classes
of color $s$ is 
$0\in \widetilde{\Lambda}(X_0)$.
Thus, we get a naive parameter count
of $$\textstyle e+24-2v-\sum_{s\in S}(n_s-1)$$ for the $d$-semistable surfaces in
the deformation class of $X_0$. 
Recall that the color $s$ subgraph
${\rm I}_s(X_0)^{[1]}$ 
is a graph of valencies $2$, $3$
with the vertices of valence $2$
corresponding to squares with
opposite sides of color $s$,
and vertices of valence $3$ corresponding
to triangles of color $s$. 
We deduce from Gauss-Bonnet
that $n_s = 2+\tfrac{1}{2}n_{\triangle, s}$ where $n_{\triangle,s}$ is the number of triangles
of color $s$ in $\Gamma(X_0)$. 
In turn, our parameter
count becomes $$e+24-2v-|S|-\tfrac{1}{2}n_\triangle$$
where $n_\triangle$ is the total number of triangles of $\Gamma(X_0)$.
Let $n_\square$
be the number of squares
in $\Gamma(X_0)$. Again by Gauss--Bonnet,
we have
$v-e+n_\triangle +n_\square =2.$
Furthermore,
$3n_\triangle + 4n_\square = 2e$.
Eliminating $n_\square$ gives
$\textstyle
v=2+\tfrac{1}{2}e-\tfrac{1}{4}n_\triangle$
and finally substituting into the parameter
count above gives a final answer of
$20-|S|.$
\end{proof}

The parameter count in Proposition
\ref{params} is valid, if and only if
the signed restriction map
$\bigoplus_{i} H^2(V_i,\bQ)\to\bigoplus_{i<j} H^2(D_{ij},\bQ)$ is surjective. 

\begin{proposition}\label{bigger}
Let $X\to B\simeq 
\Delta^S\times \Delta^T$ be 
the universal log smooth
deformation of a $d$-semistable
polync K3 surface,
endowed with a log structure of
semistable type. Then $B$ is relatively
smooth over $\Delta^S$ of relative
dimension $|T|=20$.
\end{proposition}

\begin{proof}
The relative
tangent space of $B/\Delta^S$
is $T_0(B/\Delta^S)\simeq 
H^1(X_0,\Theta^{\rm log}_{X_0/B_0})$.
Contraction with a logarithmic
non-vanishing two-form $\eta$ gives
an isomorphism
$$
H^1(X_0,\Theta^{\rm log}_{X_0/B_0})
\xrightarrow{\;\lrcorner\,\eta\;}
H^1(X_0,\Omega^{\rm log}_{X_0/B_0}).
$$
The latter space satisfies
$
H^1(X_0,\Omega^{\rm log}_{X_0/B_0})
\simeq\bC^{20}
$
by the constancy
of the logarithmic Hodge number, since the
nearby smooth fiber is a K3 surface.

Since $B$ is
smooth over $\Delta^S$ by Theorem
\ref{smoothable}, it follows that
$\dim(B/\Delta^S)=20$.
\end{proof}

\begin{remark}
The same argument as Proposition
\ref{bigger} applies more
generally in the setting of Theorem \ref{smoothable}
and Corollary \ref{universal}, and showing
that $B\simeq \Delta^S\times \Delta^T$ with
$|T|=h^{d-1,1}(X_b)$,
$d= \dim X_b$ for a smooth $K$-trivial
fiber $X_b$ of the universal deformation 
$X\to B$.
\end{remark}

A potentially confusing aspect of 
Proposition \ref{bigger} is that
$\dim B=20+|S|$, not $20$. This is because
${\rm Def}_{X_0/B_0}$ is the
deformation functor of the logarithmic
morphism, rather than the ordinary
deformation functor of $X_0$.
The forgetful map
${\rm Def}_{X_0/B_0}\to {\rm Def}_{X_0}$
can have positive-dimensional fibers,
see Propositions \ref{reparam-action},
\ref{reparam-quotient}. In our case, 
the component 
${\rm smDef}_{X_0}\subset {\rm Def}_{X_0}$
in which ${\rm Def}_{X_0/B_0}$ lands 
(Def.~\ref{sm-comp})
satisfies $${\rm smDef}_{X_0}\simeq \Delta^S\times \Delta^{20-|S|+r}.$$
Here ${\rm Aut}^0(X_0)$ is necessarily
an algebraic torus, 
and ${\rm Aut}^0(X_0)\simeq (\bC^*)^r$
by the infinitesimal injectivity of (\ref{rho}), which in turn follows from $$h^0(X_0,\Theta^{\rm log}_{X_0/B_0})=h^0(X_0, \Omega^{\rm log}_{X_0/B_0})=h^{1,0}(X_b)=0.$$

\begin{proposition}\label{monodromy-cone}
Let $T_s\colon H^2(X_t,\bZ)\to H^2(X_t,\bZ)$
be the Picard--Lefschetz transform
about $V(u_s)$
on the general fiber of the universal
smoothing $X\to {\rm smDef}_{X_0}=\Delta^S\times \Delta^{T'}$
of a $d$-semistable polysnc
K3 surface $X_0$.
Let $N_s\coloneqq \log T_s$.
There is an isotropic vector $\delta\in
H^2(X_t,\bZ)$ and a collection 
of vectors $\lambda_s\in \delta^\perp/\delta$, 
$s\in S$, such that:

\begin{enumerate}
\item $N_s(x) = (x\cdot \lambda_s)\delta-(x\cdot \delta)\lambda_s$, 
\item $\lambda_s\in \cC_\delta^+$
lies in the rational closure
of the positive cone $\cC_\delta\subset \delta^\perp/\delta \otimes \bR$,
\item $\lambda_s\cdot \lambda_s = n_{\triangle,s}$ is the number of triangles
in $\Gamma(X_0)$ of color $s\in S$,
\item $\lambda_s\cdot\lambda_{s'} = n_{\square,s,s'}$ is
the number of squares
in $\Gamma(X_0)$ colored by $s\neq s'\in S$,
\item if $\{\lambda_s\}_{s\in S}$
are linearly independent and $(X_0)_{\rm sing}^s$ is connected for all $s\in S$, ${\rm smDef}_{X_0}\simeq \Delta^{S}\times\Delta^{20-|S|}$ and an
appropriate boundary period map is a
local isomorphism.
\end{enumerate}
\end{proposition}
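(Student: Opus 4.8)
\emph{Reduction and the vector $\delta$.} The first step is to produce the common isotropic class. Applying the base change to the diagonal together with the small resolution of Proposition \ref{homeo-type}, the total monodromy $N\coloneqq\sum_{s\in S}N_s$ (the loop winding once around every $V(u_s)$) is identified with the monodromy of a one-parameter, snc, Type III degeneration whose general fibre is still $X_t$. Its limiting mixed Hodge structure is the standard Type III one: the weight filtration $W_\bullet$ on $H^2(X_t,\bZ)$ has $\operatorname{gr}^W_0\cong H^2(\Gamma(X_0),\bZ)\cong H^2(\bS^2,\bZ)\cong\bZ$, generated by a primitive isotropic $\delta$ with $\langle\delta\rangle = W_0 = \im N^2$, and $\operatorname{gr}^W_2 = \delta^\perp/\delta$. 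Since the local monodromies commute, the several-variable nilpotent orbit theory of Cattani--Kaplan--Schmid shows that all the $N_s$ share this weight filtration and satisfy $N_s W_k\subseteq W_{k-2}$; in particular $N_s\delta = 0$, $\im N_s\subseteq\delta^\perp$, and $N_s(\delta^\perp)\subseteq\langle\delta\rangle$.

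\emph{Part (1).} This is now pure linear algebra. Each $N_s$ is skew-adjoint for the intersection form (it is $\log$ of an isometry), so for any $\eta$ with $\eta\cdot\delta = 1$ one automatically has $\eta\cdot N_s\eta = 0$. Setting $\lambda_s\coloneqq -[N_s\eta]\in\delta^\perp/\delta$, a direct check using $N_s\delta=0$ and $N_s(\delta^\perp)\subseteq\langle\delta\rangle$ gives
\[ N_s x = (x\cdot\lambda_s)\delta - (x\cdot\delta)\lambda_s, \]
and the right-hand side is independent of the chosen lift of $\lambda_s$ modulo $\delta$, so $\lambda_s$ is well defined.

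\emph{Parts (3) and (4).} From the formula in (1) one computes, for any $A\subseteq S$,
\[ \Big(\textstyle\sum_{s\in A}N_s\Big)^2 x = -(x\cdot\delta)\,(\lambda_A\cdot\lambda_A)\,\delta,\qquad \lambda_A\coloneqq\textstyle\sum_{s\in A}\lambda_s, \]
so $\lambda_A\cdot\lambda_A$ is the structure constant of the rank-one operator $(\sum_{s\in A}N_s)^2$. On the other hand, $\sum_{s\in A}N_s$ is realised geometrically: restrict $X\to\Delta^S\times\Delta^T$ to the line $u_s = u$ for $s\in A$, $u_{s'} = \epsilon_{s'}$ generic for $s'\notin A$, and $0\in\Delta^T$. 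This is a one-parameter degeneration of $X_t$ with central fibre $X_0^A$ (only the colours in $A$ remaining singular), whose monodromy is $\prod_{s\in A}T_s$. Resolving as in Proposition \ref{homeo-type} — taking small resolutions of the conifold squares (\ref{conifold}) both of whose colours lie in $A$, each splitting into two triangles, while triangles and squares involving a colour outside $A$ are (partly) smoothed and contribute no triangle — yields an snc Type III model $\widetilde{X_0^A}$ with $\sum_{s\in A}n_{\triangle,s}$ triangles from colour-$A$ triple points and $2\sum_{\{s,s'\}\subseteq A}n_{\square,s,s'}$ from the subdivided colour-$A$ squares. By the classical triple-point computation of the Type III monodromy \cite{fs, prog}, $\lambda_A\cdot\lambda_A$ equals this triangle count, where I have used Observation \ref{obs} (each triangle has one colour, each square exactly two). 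Taking $A=\{s\}$ gives $\lambda_s\cdot\lambda_s = n_{\triangle,s}$, and then $A=\{s,s'\}$ gives $\lambda_s\cdot\lambda_{s'} = n_{\square,s,s'}$.

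\emph{Parts (2), (5), and the main obstacle.} For (2), $\lambda_s = -[N_s\eta]$ is the image under $N_s$ of a class pairing positively with $\delta$; choosing $\eta$ to be (the limit of) a relatively Kähler class and invoking the positivity of the polarised limiting mixed Hodge structure shows that $\lambda_s$ is a nef limit, hence lies in the rational closure $\cC_\delta^+$ of the positive cone, consistently with $\lambda_s\cdot\lambda_s = n_{\triangle,s}\ge 0$. For (5), the count of Proposition \ref{params} is valid once $\bigoplus_i H^2(V_i,\bQ)\to\bigoplus_{i<j}H^2(D_{ij},\bQ)$ is surjective; via the Clemens--Schmid sequence its cokernel is detected by $\operatorname{gr}^W_4$, and the plan is to argue that linear independence of $\{\lambda_s\}$ forces the $|S|$ operators $N_s$ to impose independent conditions (so $\operatorname{gr}^W_4$ attains its minimal rank $1=b_2(\bS^2)$), while connectedness of each $(X_0)^s_{\rm sing}$ forces the only relation among the colour-$s$ slab classes $\xi_G$ to be the total one $\sum_G\xi_G=0$; together these give surjectivity and make the naive count $20-|S|$ exact. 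The main obstacle is the geometric monodromy computation underlying (3)--(4): identifying $\sum_{s\in A}N_s$ with the one-parameter monodromy, checking that the resolution contributes exactly one triangle per colour-$A$ triple point and two per colour-$A$ square, and fixing the normalisation so that each contributes $+1$ to $\lambda_A\cdot\lambda_A$; the cohomological identification of the cokernel in (5) with the monodromy data is the second delicate point.
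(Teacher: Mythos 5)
Your proposal is correct and, on the quantitative core (3)--(4), follows essentially the paper's route: base change to one-parameter slices $u_s=u$ (first a single color, then a pair, with the other coordinates generic constants), small resolution of the conifold squares as in Proposition \ref{homeo-type}, the identity $N_{s,s'}=N_s+N_{s'}$ from $\pi_1((\Delta^*)^S\times\Delta^T)\simeq\bZ^S$, and the triple-point formula of \cite{fs}; your generalization to arbitrary $A\subseteq S$ is cosmetic, since only $|A|\le 2$ is used. Where you genuinely diverge: for (1) you supply the explicit linear algebra ($\lambda_s=-[N_s\eta]$, skew-adjointness giving $\eta\cdot N_s\eta=0$, well-definedness mod $\delta$) where the paper merely cites generalities on weight filtrations of type $(1,h,1)$; and for (2) you argue via positivity of the polarized limiting MHS and a limit K\"ahler class, whereas the paper extracts (2) from the same one-parameter slice and \cite[Prop.~7.1]{fs}. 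Both routes are legitimate at the sketch level. For (5) the paper explicitly defers the proof to other work, so your Clemens--Schmid plan is no less complete than the original.

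One imprecision to fix: the individual $N_s$ do \emph{not} in general share the weight filtration of $N=\sum_s N_s$. When $n_{\triangle,s}=0$ (so $\lambda_s^2=0$, e.g.\ the yellow, green, blue colors of the rhombicuboctahedron example), the fiber over the generic point of $V(u_s)$ is Type II and $W(N_s)$ is the Type II filtration, not the Type III one. What Cattani--Kaplan--Schmid actually give, and all your derivation of (1) uses, is that each $N_s$ is a $(-1,-1)$-morphism of the limiting MHS for the cone's filtration $W(\sigma)$, i.e.\ $N_sW_k\subseteq W_{k-2}$, $N_s\delta=0$, $N_s(\delta^\perp)\subseteq\langle\delta\rangle$; state it that way and the argument stands.
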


We call the $\bR_{\geq 0}$-span of $\{\lambda_s\}_{s\in S}$
in $\cC_\delta^+$ the {\it monodromy cone}
of the degeneration.

\begin{proof}[Sketch.] 
Item (1) follows from generalities
on the weight filtration of a degeneration
of Hodge structures of weight
$2$ and type $(1,h,1)$ over a polydisk.

Items (2) and (3) follow from taking
a generic $1$-parameter degeneration
$X_s\to \Delta_s\subset \Delta^S\times \Delta^{T'}$
via base change along
$u_s=u$ and $u_{s'} = {\rm const}\in \Delta^*$ 
for $s'\neq s$ (and $v_t=0$). 
This is a $1$-parameter Kulikov
model transversely slicing the divisor
$V(u_s)$. 
So we may apply the result
\cite[Prop.~7.1]{fs},
which gives the formula $\lambda_s\cdot \lambda_s=
\#\{\textrm{triple points of }(X_s)_0\}$.
Here the key point
is that the triangles of color $s$
in $\Gamma(X_0)$
are in bijection with the triple
points of $(X_s)_0$. 

To prove (4), take a $1$-parameter
degeneration $X_{s,s'}\to \Delta_{s,s'}$ 
via the base change
$u_s=u_{s'}=u$ and $u_{s''}={\rm const}\in \Delta^*$
for $s''\neq s,s'$. The base changed 
degeneration is not Kulikov, but becomes 
Kulikov after passing to a crepant resolution
$\wX_{s,s'}\to \Delta_{s,s'}$
as in Proposition \ref{homeo-type}.
The resolution $\wX_{s,s'}$
corresponds to subdividing every square
in $\Gamma(X_0)$ associated to a 
conifold point $x_iy_i=x_jy_j=u$
with colors $\{s(i), s(j)\}=\{s,s'\}$
into two triangles---each such
square introduces
two triple points to $(\wX_{s,s'})_0$
in addition to those already on $(X_{s,s'})_0$ 
coming from triangles of color $s$ or $s'$. Note that the monodromy
over $\Delta_{s,s'}^*$ is the product
of the monodromies over $\Delta_s^*$
and $\Delta_{s'}^*$ because
$\pi_1((\Delta^*)^S\times \Delta^{T'})\simeq \bZ^S$.
Hence the logarithms 
of monodromy satisfy 
$N_{s,s'} = N_s+N_{s'}$.
It again follows
by applying \cite[Prop.~7.1]{fs} that
$$(\lambda_s+\lambda_{s'})\cdot 
(\lambda_s+\lambda_{s'}) = n_{\triangle,s} + 2n_{\square,s,s'}+n_{\triangle,s'}.$$
Expanding the left-hand side
and applying (3) gives (4).
We prove (5) in Lemma \ref{li}.
\end{proof}

\begin{lemma}\label{li} Suppose $X_0$ is
an $S$-colored, polync K3 surface
with each $(X_0)_{\rm sing}^s$ connected.
Then ${\rm Aut}^0(X_0)\simeq (\bC^*)^r$
where $r$ is the rank of the space of
linear relations amongst the $\{\lambda_s\}_{s\in S}$.

Furthermore, when $r=0$, 
there is a local holomorphic period map
$\Phi\colon {\rm smDef}_{X_0}\to X(\sigma)$
to a smooth 20-dimensional variety
which is a local isomorphism.
\end{lemma}

\begin{proof}
As in Proposition \ref{reparam-quotient}, the
long exact sequence of (\ref{exact}) contains
\[
0\to H^0(X_0,\Theta_{X_0})
\to \bC^S\xrightarrow{\partial}
H^1(X_0,\Theta^{\rm log}_{X_0/B_0}),
\]
where we have used
$H^0(X_0,\Theta^{\rm log}_{X_0/B_0})=0$.
The first map is the differential of
$\rho$ in (\ref{rho}).

Let $\pi\colon X\to B$ be the universal
log smooth deformation and let
$B^\circ\subset B$ be its smooth locus.
The Deligne extension of
$R^2\pi^\circ_*\underline{\bZ}_{X^\circ}
\otimes \cO_{B^\circ}$ 
carries a logarithmic
Gauss--Manin connection $\nabla$ and an
extended Hodge filtration $\cF^\bullet$.
In our case,
we have an isomorphism
\begin{align}\label{ks}
H^1(X_0,\Theta^{\rm log}_{X_0/B_0})\simeq {\rm Hom}(H^0(X_0, \omega_{X_0}^{\rm log})\to H^1(X_0, \Omega^{\rm log}_{X_0/B_0})) = {\rm Hom}(\cF^2_0, \cF^1_0/\cF^2_0)
\end{align}
given by contraction with a logarithmic two-form,
see Proposition \ref{bigger}. The associated
graded of the logarithmic connection gives the
logarithmic Higgs field 
$${\rm gr}_\cF(\nabla)=\textstyle \sum {\rm gr}_\cF^p(\nabla)\in  
\bigoplus_{p+q=2} {\rm Hom}
(\cF^p/\cF^{p+1}, \cF^{p-1}/\cF^p)
\otimes \Omega^{\rm log}_B.$$
Furthermore, we have
as in the classical calculation relating the Gauss--Manin connection
to the Kodaira--Spencer map,
an equality 
\begin{align}\label{eq1}\iota_{u_s\partial_s}\circ
{\rm gr}^2_\cF(\nabla) = \partial(e_s)\end{align}
under the isomorphism (\ref{ks}). Here $u_s\partial_s\in \Theta_{B,\,0}^{\rm log}$
is a logarithmic tangent vector at $0\in B$
and $\iota$ denotes the contraction.

If $N_s=\log T_s$, then $N_s$ defines a weight $-2$
endomorphism of the mixed Hodge structure
$(\bV_0, \cF^\bullet_0, W_\bullet)$ on the 
logarithmic de Rham
cohomology over $0\in B$.
By \cite[Prop.~II.3.11]{deligne-reg},
\begin{align}\label{eq2}
{\rm Res}_{V(u_s)}(\nabla) = 
\iota_{u_s\partial_s} \circ \nabla
=-(2\pi i)^{-1}N_s
\end{align}

In the Type III case, $W_0=\bZ\delta$
and $W_2/W_0=\delta^\perp/\delta$. The
weight-graded part of $N_s$ is therefore
\begin{align*}
{\rm gr}_2^W(N_s)\colon
\delta^\perp/\delta&\to\bZ\delta, \\
x&\mapsto (x\cdot\lambda_s)\delta.
\end{align*}
with $\lambda_s$ as in 
Proposition \ref{monodromy-cone}.
The condition that 
$\cF^\bullet_0$ define a mixed Hodge
structure for the weight filtration 
$W_\bullet$ implies that
we can identify 
\begin{align}\label{ident}T_{[\cF^\bullet_0]}\bD^\vee= 
{\rm Hom}(\cF^2_0,\cF^1_0/\cF^2_0)\simeq {\rm Hom}(\delta^\perp/\delta,\,\bZ\delta)\otimes \bC \simeq \delta^\perp/\delta \otimes \bC\end{align}
where $\bD^\vee=\{x\cdot x=0\}\subset \bP(L_{\rm K3}\otimes \bC)$; the latter spaces are the tangent space
to the mixed period domain for $W_\bullet$. See 
e.g.~\cite[Prop.~2]{carlson}.
Furthermore, we have 
${\rm gr}_\cF^2(N_s)\leftrightarrow 
{\rm gr}^W_2 (N_s)$ under the identification (\ref{ident}).
It follows from (\ref{eq1}, \ref{eq2}) that, 
up to a scalar,
$\partial$ can be identified
with the mapping
$e_s\mapsto \lambda_s\in \delta^\perp/\delta\otimes \bC$.

Hence $r=0$ if and only if
$\lambda_s$ are linearly independent.
More generally, the exact sequence above
identifies the Lie algebra
$H^0(X_0,\Theta_{X_0})$ of
${\rm Aut}^0(X_0)$ with $\ker(\partial)$,
which is the space of linear relations
amongst the monodromy vectors $\lambda_s$.
Since $\rho$ in (\ref{rho})
has finite kernel and toric image,
it follows that
${\rm Aut}^0(X_0)\simeq(\bC^*)^r$.

In the Type II case, we have that
the number $|S|$ 
of connected components of 
$(X_0)_{\rm sing}$ minus one is exactly
the number $r$ of intermediate ruled components
of $X_0$. Furthermore, all $\lambda_s$ are equal
to the same isotropic vector $\lambda$. Hence ${\rm Aut}^0(X_0)\simeq (\bC^*)^r$ where $r=|S|-1$ 
is the rank of the linear
relations amongst the $\lambda_s=\lambda$.
This completes the proof of the first part. \smallskip

We now consider the second part of the proposition.
Let $\bD_\delta\coloneqq 
\{x\in \bD^\vee\,:\,x\cdot \delta\neq 0\}$
and let $U_\delta\subset 
{\rm Stab}_{O(L_{\rm K3})}(\delta)$ 
be the unipotent radical $U_\delta$. 
We have an analytic
open embedding
$$\bD_\delta/U_\delta\hookrightarrow \delta^\perp/\delta\otimes \bC^*\simeq 
(\bC^*)^{20},$$
see \cite[Prop.~4.13]{ae}. Then, the monodromy $\pi_1(B^\circ)\to O(L_{\rm K3})$ factors through
$U_\delta$ and hence, we have a well defined period
map $\Phi^\circ \colon B^\circ \to 
\delta^\perp/\delta
\otimes \bC^*$.
Let $Y(\sigma)$ be the affine
toroidal extension
associated to the rational polyhedral
cone $\sigma = 
\bR_{\geq 0}\{\lambda_s\}_{s\in S}$. By the multivariable nilpotent orbit theorem \cite[Thm.~2.1]{cattani-kaplan}, 
$\Phi^\circ$ extends 
holomorphically to a map 
$\Phi'\colon B\to Y(\sigma)$.

Choose a finite index sublattice 
$L\subset \delta^\perp/\delta$ for which 
$\{\lambda_s\}_{s\in S}$ form part of a basis
of $L$. Then there is a toroidal extension
$X(\sigma)$ of $L\otimes \bC^*$ together 
with a finite map $X(\sigma)\to Y(\sigma)$
restricting to the isogeny $L\otimes \bC^*\to 
\delta^\perp/\delta\otimes \bC^*$ 
on the open
torus orbit. The map $\Phi'$ lifts
to a map $$\Phi\colon B\to X(\sigma).$$

We have that $X(\sigma)\simeq \bC^S
\times (\bC^*)^{20-|S|}$
is smooth, and the restriction
of $\Phi$ to a transverse slice
$\Delta^S\times \{0\}\subset 
\Delta^S\times \Delta^T\simeq B$
transversely slices the deepest
toroidal
boundary
stratum $\partial_\sigma\simeq \{0\}
\times (\bC^*)^{20-|S|}\subset X(\sigma)$.
Let
$
B^{\rm lt}=\{0\}\times\Delta^T
$
and let
$
C^{\rm lt}=C\cap B^{\rm lt}
$
with $C\simeq {\rm smDef}_{X_0}$ 
the slice of Proposition
\ref{reparam-quotient}.
Consider the logarithmic differential
$$d^{\rm log}\Phi_0\colon T_0^{\rm log}B
\to T_{\Phi(0)}^{\rm log}X(\sigma)$$ 
i.e.~the infinitesimal logarithmic 
period map of $\Phi$. 
By (\ref{ks}), its
restriction to the subspace
$T_0 B^{\rm lt}=
H^1(X_0,\Theta_{X_0/B_0}^{\rm log})$
is an isomorphism onto $T_{\Phi(0)}^{\rm log}X(\sigma)$.
Denote this isomorphism by
$\ell$. Let
$
V_\sigma
=\bC\langle\lambda_s\rangle_{s\in S}
$.
We have shown
$
\ell(\partial(e_s))=\lambda_s
$ and
$
\ell\colon {\rm im}(\partial)\to V_\sigma
$ restricts to an isomorphism,
by the hypothesis $r=0$.

The period map $\Phi$ is invariant 
under
the reparameterization action 
of Proposition \ref{reparam-quotient}
and hence factors
through the quotient/slice $C$. 
So we have 
an identification
$$
T_0C^{\rm lt}
\simeq
H^1(X_0,\Theta_{X_0/B_0}^{\rm log})/
{\rm im}(\partial).
$$
We therefore have a commutative diagram
$$
\xymatrix{
0\ar[r]&
{\rm im}(\partial)\ar[r]\ar[d]^{\sim}&
H^1(X_0,\Theta_{X_0/B_0}^{\rm log})
\ar[r]\ar[d]_{\ell}^{\sim}&
T_0C^{\rm lt}\ar[r]\ar[d]^{d\Phi_0}&
0\\
0\ar[r]&
V_\sigma\ar[r]&
T_{\Phi(0)}^{\rm log}X(\sigma)\ar[r]&
T_{\Phi(0)}\partial_\sigma\ar[r]&
0.
}
$$
The bottom row is the logarithmic tangent
sequence of the deepest toric stratum.
The first two vertical arrows are
isomorphisms, and hence so is the third.
So $C^{\rm lt}\to \partial_\sigma$ 
is a local isomorphism. Combined with
the fact that $\Phi$ restricted to $\Delta^S\times \{0\}$ transversely
slices $\partial_\sigma$ we deduce
that $\Phi\vert_C$ itself is a local isomorphism.
\end{proof}

\section{Examples}\label{examples}

Now we arrive at the 
main point of the present
work; to give concrete 
examples of multiparameter
Kulikov models, and explicit recipes for
building them in various cases.

Combinatorial diagrams for snc Type III
K3 surfaces have already appeared quite
a bit in the literature, starting
with \cite{fm}, and with many
more explicit examples in 
\cite{engel1, engel2, engel3, 
engel4, engel5}. It is typical
to draw the intersection complex,
so that the polyhedral
$2$-cells correspond to components of $X_0$,
the $1$-cells to double curves of $X_0$, 
and $0$-cells to point strata of $X_0$.
Thus, in the snc case, all vertices
are trivalent, whereas in the polysnc
case they will be $3$- and $4$-valent.
We first give some snc examples
as a warm-up:

\begin{figure}[h]
    \centering
    \includegraphics[width=\linewidth]{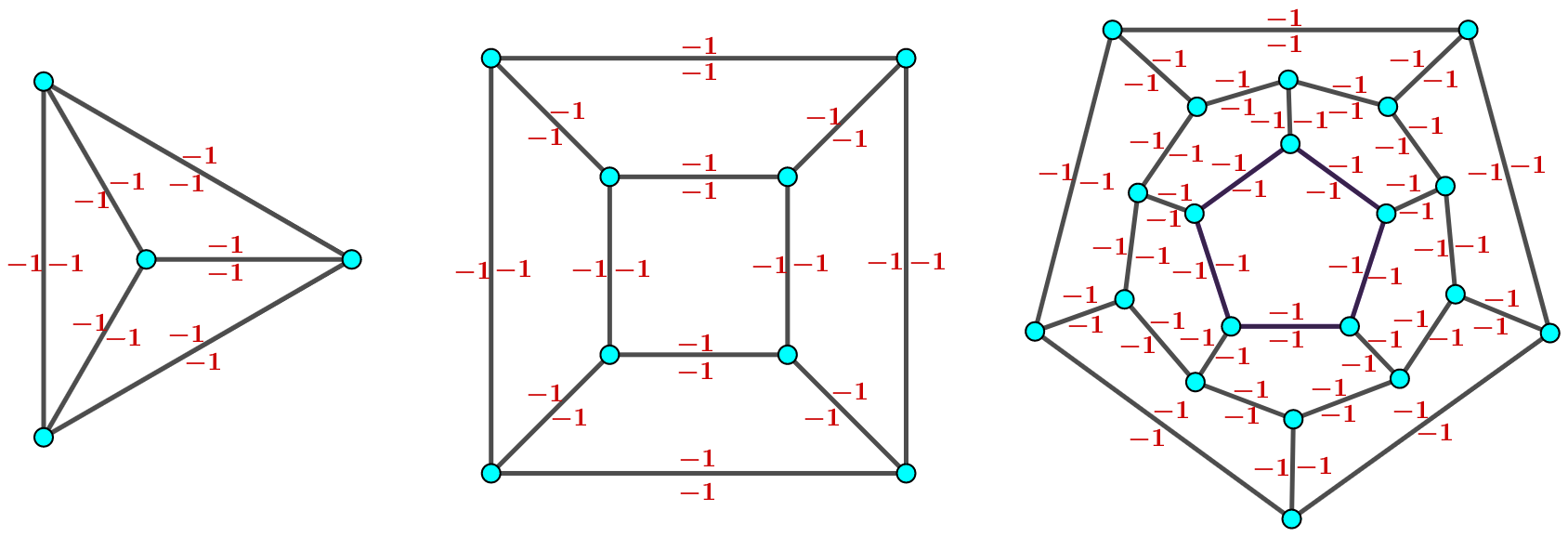}
    \caption{Some snc surfaces built
    from platonic solids}
    \label{fig:platonic}
\end{figure}

\begin{example}[snc platonic solids]
Let $\Gamma$ be a platonic solid
whose faces are triangles. So
$\Gamma \in\{\textrm{tetrahedron, octahedron,
icosahedron}\}$. Then $\Gamma=\Gamma(X_0)$
is the dual complex of an snc surface
$X_0 = \bigcup_i V_i$ defined as follows:
Let $(V^k, D^k)$ be a log CY surface
pair for which $D^k$ is a cycle
of $2\leq k\leq 6$ curves of 
self-intersection $-1$ (the analogue
for $k=1$ is actually a pair $(V^1,D^1)$
where $D^1$ is an irreducible nodal
anticanonical divisor of self intersection
$+1$). 
Then $X_0$ is defined by gluing
copies of $(V^k,D^k)$ exactly
so that $\Gamma(X_0)=\Gamma$. 

For example, $(V^3,D^3)$ is the blow-up
of $(\bP^2, \triangle)$ at six 
smooth points of an anticanonical triangle,
two on each of the 
three lines. $(V^4,D^4)$ is the blow-up
of $(\bP^1\times \bP^1, \square)$
at four points, one on each of the four
edges of an anticanonical square.
And $(V^5,D^5)$ is a del Pezzo surface
of degree $5$ with an anticanonical
pentagon of $(-1)$-curves.
The intersection complexes ${\rm I}(X_0)$ 
of the snc Type III K3 surfaces glued from
these pairs are depicted in Figure
\ref{fig:platonic}. 

Each of the three planar
diagrams in Figure
\ref{fig:platonic} 
should be viewed as living on 
$\bS^2$ with the ``outer'' face 
containing
$\infty\in \bS^2$. The double curves
$D_{ij}=V_i\cap V_j$ are depicted in 
black and the triple points in cyan.
Each edge is
labeled by two red integers; the integer
in the $2$-cell for $V_i$ denotes $D_{ij}^2$
and the integer in the $2$-cell for $V_j$
denotes $D_{ji}^2$.

These surfaces
$X_0$ are examples
of Kulikov models in {\it $(-1)$-form},
since every double curve $D_{ij}$ 
(which is not irreducible
nodal) satisfies $D_{ij}^2=-1$. Up to 
Atiyah flops, every snc Type III model 
can be put in $(-1)$-form 
by the Miranda--Morrison
$(-1)$-theorem \cite[Ch.~V]{prog}.
Deformation classes of 
$(-1)$-forms correspond to convex
triangulations of the sphere \cite{laza}.
\end{example}

\begin{figure}[h]
    \centering
    \includegraphics[width=0.9\linewidth]{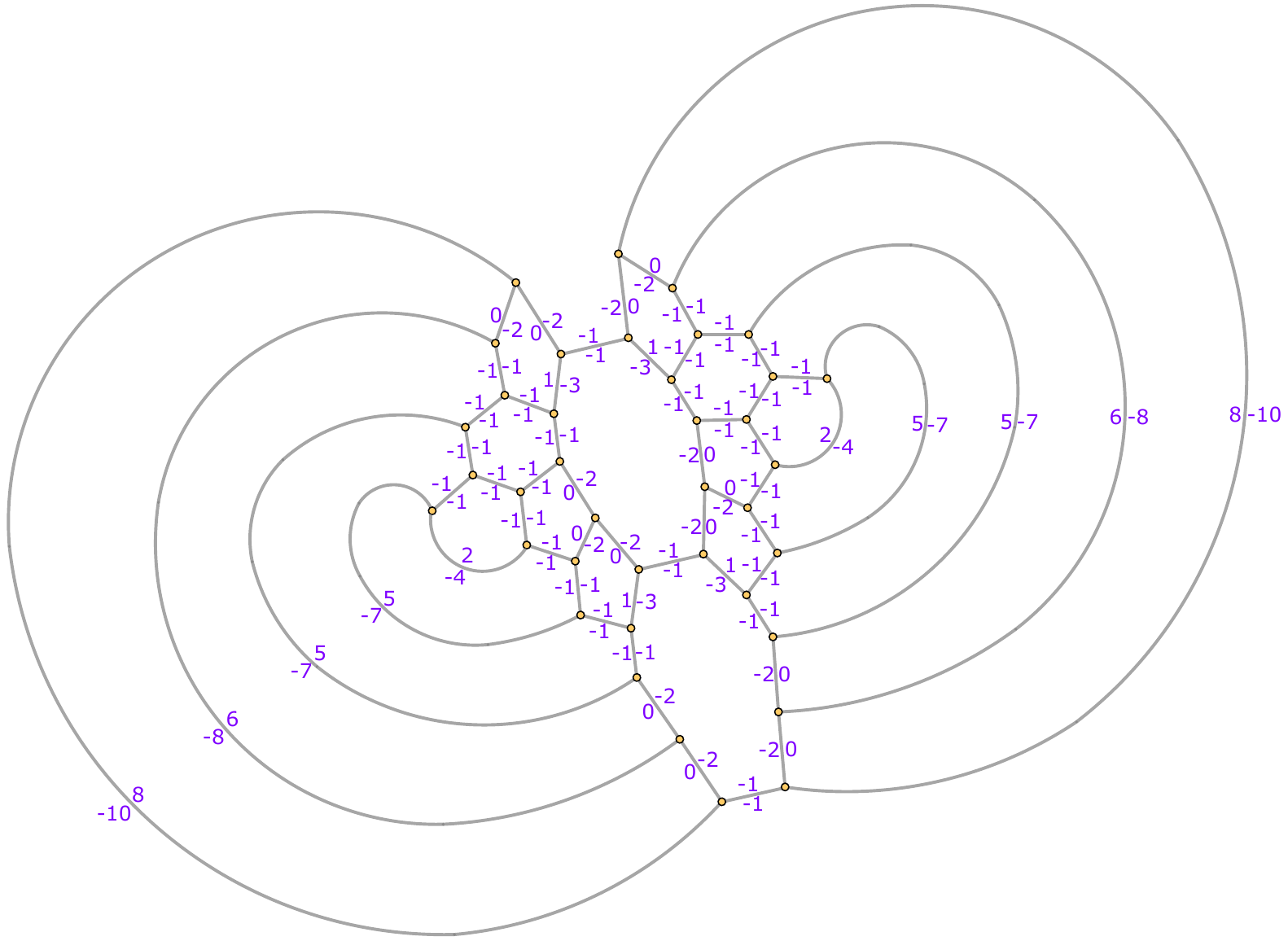}
    \caption{A Kulikov degeneration
    of degree $2$ K3 surfaces from \cite{engel2}. 
    Triple points in yellow, double curves in grey,
    self-intersections $D_{ij}^2$ and $D_{ji}^2$
    in purple.}
    \label{kulikov-ex2}
\end{figure}

Of course, not every Kulikov model
is in $(-1)$-form. For example, see
Figure \ref{kulikov-ex2}. The key
points here are that (1) the depicted
graph is trivalent, (2) the labels
satisfy $D_{ij}^2 + D_{ji}^2=-2$,
and (3) for all cells $i$,
the cycle of integers
$(D_{ij}^2)$ for $j$ cyclically ordered,
appear as the self-intersection
sequence
of an anticanonical cycle on a log CY 
surface pair (a good exercise for the reader
is to check this for a few randomly chosen $2$-cells). \medskip

We now give some examples
of legitimately multiparameter
Kulikov models.

\begin{example}[Rhombicuboctahedron]
We will build a multiparameter Kulikov model
over a $4$-dimensional base $X\to \Delta^4$.
We set $\Gamma$ to be the 
rhombicuboctahedron (one of the
Archimedean solids), which is a
polysimplicial complex of dimension
$2$, consisting
of $n_\triangle = 8$ triangles and 
$n_\square = 18$ squares;
see Figure \ref{fig:rhomb}.
It admits an $S$-coloring as in Definition
\ref{colorable} by the four element set 
$S=\{{\rm red}, \,{\rm yellow},
\,{\rm green}, \, {\rm blue}\}$,
as depicted in Figure \ref{fig:rhomb}.

\begin{figure}
    \centering
    \includegraphics[width=0.4\linewidth]{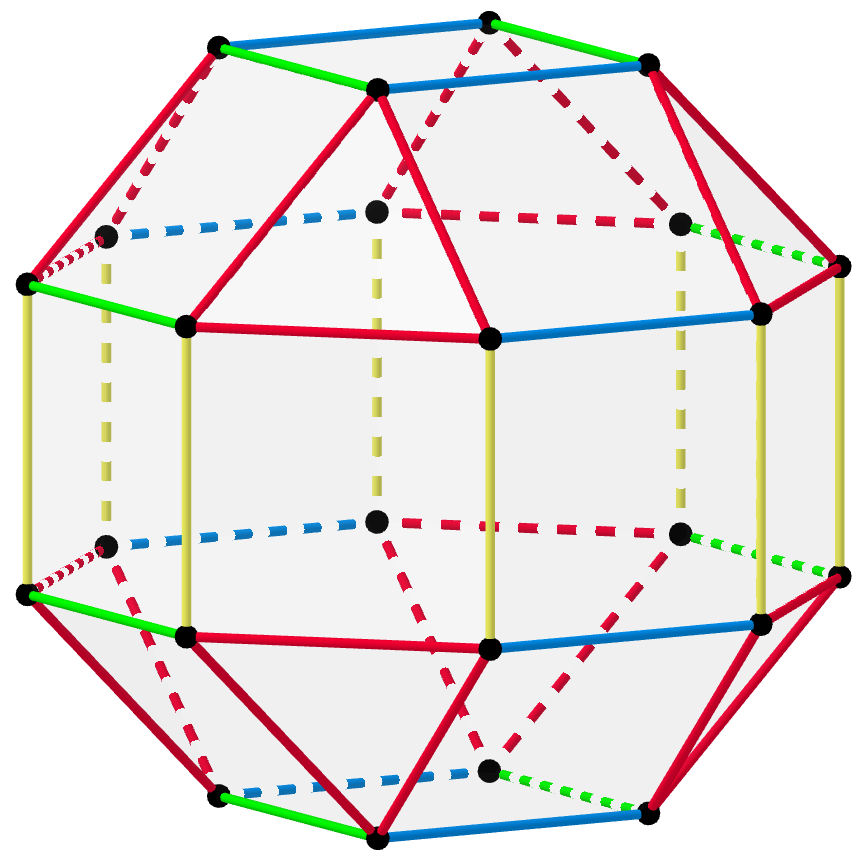}
    \caption{Rhombicuboctahedron, colored
    by four colors}
    \label{fig:rhomb}
\end{figure}

\begin{figure}
    \centering
    \includegraphics[width=0.53\linewidth]{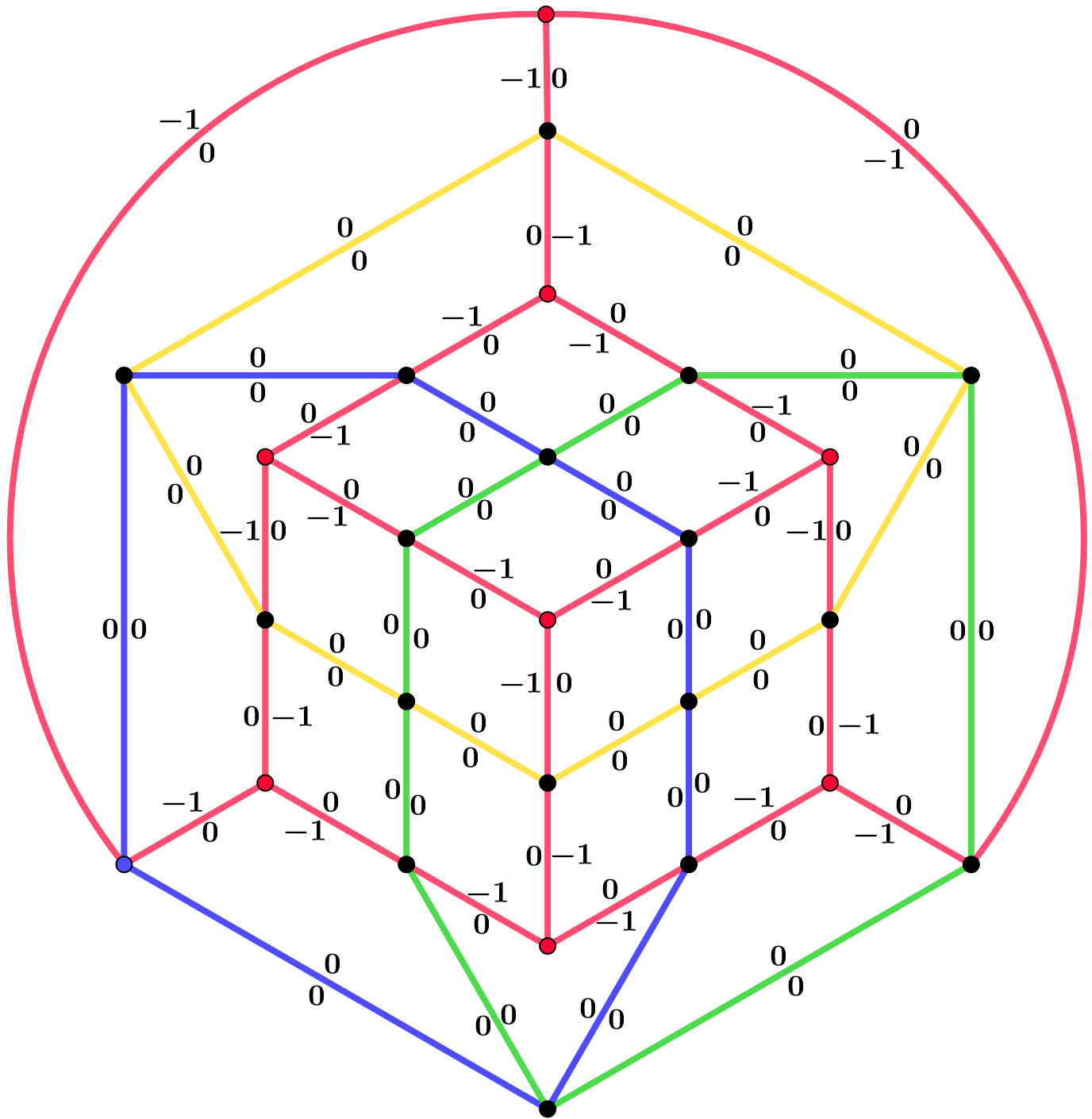}
    \caption{Polysnc Type III K3 
    surface with
    dual complex the rhombicuboctahedron}
    \label{fig:rhomb-dual}
\end{figure}

The corresponding
intersection complex ${\rm I}$ is shown
in the plane in Figure \ref{fig:rhomb-dual}.
We define a polysnc Type III K3
surface
$X_0$ such that $\Gamma(X_0)=\Gamma$
is the rhombicuboctahedron, so
that ${\rm I}(X_0)={\rm I}$. We set 
$X_0=\textstyle \bigcup_{i=1}^{24} (V,D)$
to be a union of $24$ copies of the same
anticanonical pair $(V,D)$, which is the
blow-up of $(\bP^1\times \bP^1,\square)$
at a single non-torus fixed
point on the anticanonical square.
The $24$ copies are glued according
to the combinatorics of Figure
\ref{fig:rhomb-dual}. This is a nice
demonstration of the conservation of charge
formula, Proposition \ref{charge}, since $Q(V,D)=1$.

We now compute the monodromy cone, generated
by $\lambda_{\rm red}$, $\lambda_{\rm yellow}$, 
$\lambda_{\rm green}$, $\lambda_{\rm blue}$
as defined in Proposition \ref{monodromy-cone}.
We have the following values:
\begin{align*}
    &n_{\triangle,\,{\rm red}}=8, &&
    n_{\triangle,\,{\rm yellow}}=
    n_{\triangle,\,{\rm green}}=
    n_{\triangle,\,{\rm blue}}=0,  \\
    &n_{\square, \,{\rm red},\, s} = 4 &&\textrm{for }s\in\{\textrm{yellow, green, blue}\}, \\
    &n_{\square, \,s,\, s'} = 2 &&\textrm{for }s\neq s'\in\{\textrm{yellow, green, blue}\}.
\end{align*}

By Proposition
\ref{monodromy-cone}(3, 4),
the intersection
matrix of 
$(\lambda_{\rm red}, \,\lambda_{\rm yellow},\,
\lambda_{\rm green},\, \lambda_{\rm blue})$
is
$$
\begin{pmatrix}
8 & 4 & 4 & 4 \\  
4 & 0 & 2 & 2 \\  
4 & 2 & 0 & 2 \\  
4 & 2 & 2 & 0 \\  
\end{pmatrix},
$$
which is non-degenerate.
Furthermore, the singular locus
$(X_0)_{\rm sing}^s$ of every
color is connected. 
It follows from Proposition
\ref{monodromy-cone}(5)
that the universal
$d$-semistable deformation
of $X_0$ is $$X\to \Delta^4\times \Delta^{16}$$
where $\Delta^{16}$ parameterizes the locally
trivial, $d$-semistable deformations of $X_0$.
Taking any transverse slice, such
as $\Delta^4\times \{0\}$, we get 
a Kulikov model over a $4$-parameter base.

Over the generic point of $V(u_{\rm red})$, 
the fiber is an snc Type III K3 surface 
in the same locally trivial deformation
class as the octahedral surface depicted
in the middle of Figure \ref{fig:platonic}.
Over the generic point of $V(u_{\rm yellow})$,
$V(u_{\rm green})$, or $V(u_{\rm blue})$,
the fiber is an snc Type II K3 surface,
which is the union of two rational log CY
pairs, along a smooth elliptic curve.
There 
are two blue slabs,
two green slabs, two yellow slabs,
and six red slabs. 
For example, over $V(u_{\rm blue})$,
each blue slab deforms to a smooth
rational surface, and the blue
octagon of $\bP^1$s in $X_0$ deforms
to a smooth elliptic double curve
of the resulting snc Type II K3 surface.
These facts are most easily
ascertained by blurring one's eyes
and focusing on only one color,
in Figure \ref{fig:rhomb-dual}.

Over a $3$-dimensional 
transverse slice of the coordinate axis
$V(u_{\rm yellow},u_{\rm green},u_{\rm blue})$, we get a Kulikov model
with three colors, whose dual
complex is the cube.
\end{example}

\begin{example}[Elliptic K3 surfaces with
an additional involution]

Consider Figure \ref{fig:rainbow}. It defines
a triangle-squarulation $\Gamma$
of $\bS^2$ by gluing
the sequence of $19$ segments along the bottom 
edge, all given by the vector 
$\langle 1,0\rangle$ in $\bR^2$,
to the corresponding
sequence of $19$ segments along
the top edge, successively given by
the vectors $\langle 1, 10-n\rangle$,
for $n=1,\dots,19$.

\begin{figure}
    \centering
    \includegraphics[width=0.8\linewidth]{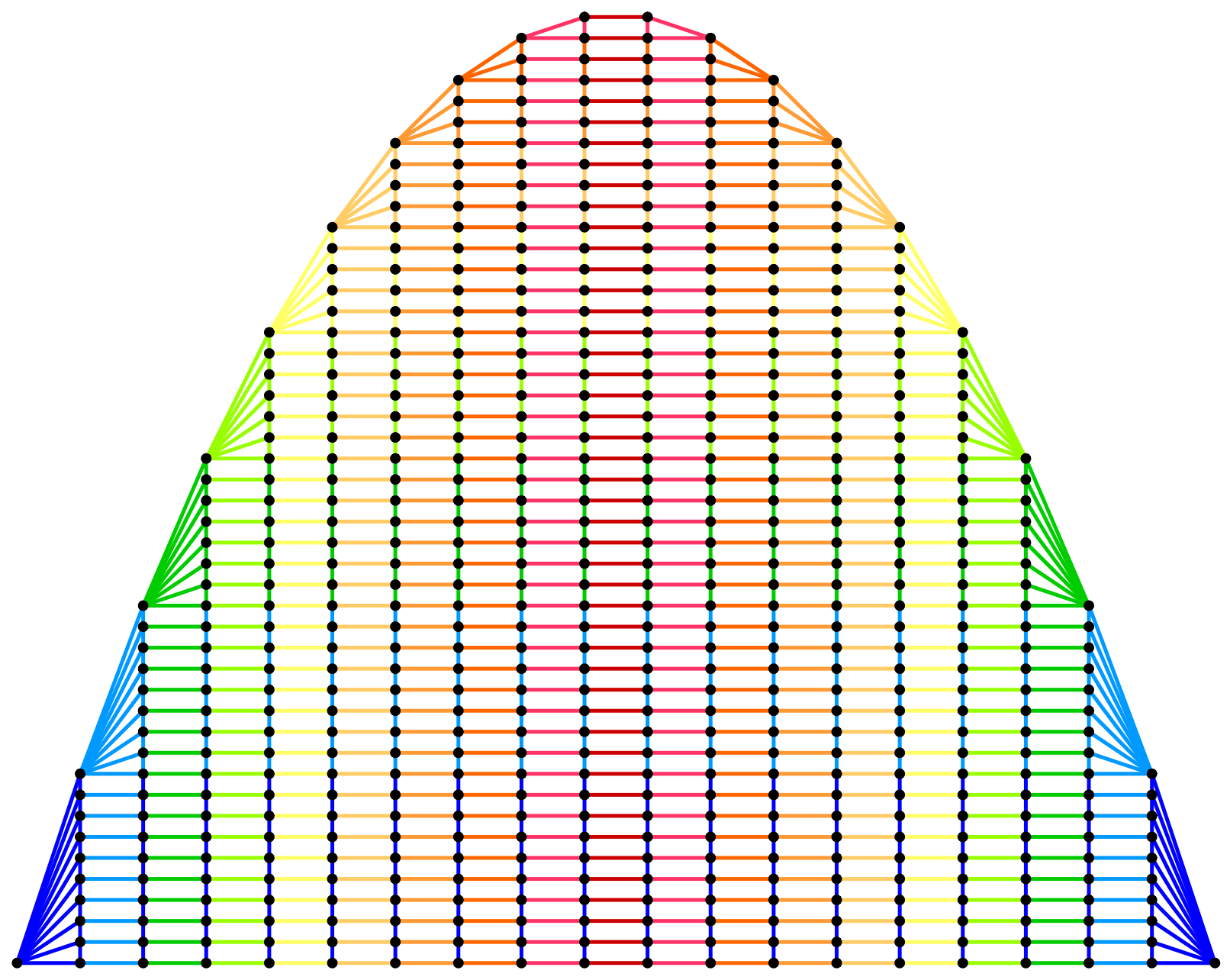}
    \caption{Dual complex
    of a polysnc Type III K3 
    surface, colored by $10$ colors.
Color index:
(1) dark blue,
(2) light blue,
(3) green,
(4) light green,
(5) yellow,
(6) light orange,
(7) orange,
(8) orange-red,
(9) pink-red,
(10) dark red.}
    \label{fig:rainbow}
\end{figure}

The coloring by a $10$-element
set $S=\{1,2,3,4,5,6,7,8,9,10\}$
depicted in Figure \ref{fig:rainbow} 
satisfies Observation \ref{obs}.
Index the colors
successively by the rainbow, 
as in the caption of Figure \ref{fig:rainbow}.
To describe the log CY pairs
appearing as components (corresponding
to the black vertices in 
Figure \ref{fig:rainbow}), we first define
an integral-affine structure on $\bS^2$
by declaring that the gluing map of the
paired edges is the unique shear in 
$\SL_2(\bZ)$ preserving vertical lines
and identifying the given edges.
See \cite[Sec.~7A]{engel3} for further
details. Then, we may interpret the local
integral-affine structure in the neighborhood
of a vertex as the {\it pseudofan}
or {\it tropicalization} of a log
CY pair $(V_i,D_i)$, 
see e.g.~\cite[Sec.~1.2]{ghk2},
\cite[Def.~3.8]{engel1},
\cite[Defs.~5.5, 5.9]{engel3}.

There are $20$ pairs $(V_i,D_i)$
for which $Q(V_i,D_i)>0$ and they 
exactly correspond to the integral-affine
singularities, i.e.~the vertices
along the bottom boundary (note that
this equals the top boundary 
because of the gluing). Their charges
$Q(V_i,D_i)$ are successively
$3,1,1,\dots,1,1,3$,
summing to $24$,
and their deformation types
are denoted $X_3 I_1 I_1\dots I_1 I_1 X_3$
in \cite[Sec.~7B]{engel3}.
For example, $(V_1,D_1)\simeq (V_{20},D_{20})$
are both the unique rational elliptic surface
admitting a cycle of nine $(-2)$-curves as the
anticanonical divisor/double locus.

All other log CY pairs
$(V_j,D_j)$ modeling a vertex are toric;
there are $552$ such toric pairs, corresponding
to the black
lattice points in the strict
interior of Figure \ref{fig:rainbow}.
The vast majority of them are
isomorphic to $(\bP^1\times \bP^1,\square)$.
Then we build a $d$-semistable 
polysnc Type III surface $$\textstyle X_0=\bigcup_{i=1}^{20} (V_i,D_i)\cup \bigcup_{j=21}^{572}(V_j,D_j)$$ by gluing
the log CY pairs along their anticanonical
boundaries, in such a way
that $\Gamma(X_0)=\Gamma$,
and $X_0$ is $d$-semistable, see Proposition
\ref{def-to-dss}.

A refinement $\widetilde{\Gamma}$
decomposing each square into triangles
corresponds to an snc ``resolution''
with
$\Gamma(\wX_0)=\widetilde{\Gamma}$
as in Proposition \ref{homeo-type}.
The resulting snc Type III surfaces
$\wX_0$ which so arise are Kulikov models
of elliptic K3 surface degenerations, as
described by \cite[Sec.~7]{engel3}. 

\begin{table}
\begin{tabular}{|p{0.9em}||c|c|c|c|c|c|c|c|c|c|}
\hline 
\diagbox[width=1.8em, height=1.8em,
innerleftsep=0.1em, innerrightsep=0.3em]{
\raisebox{-0.6ex}[0pt][0pt]{$s'$}
}{\raisebox{0.5ex}[0pt][0pt]{$s$}} 
& $1$ & $2$ & $3$ & $4$ & $5$ & $6$ & $7$ & $8$ & $9$ & $10$  \\
\hline 
\hline
\rule[-0.6em]{0pt}{1.2em} $1$ & $18$ & $18$ & $18$ & $18$ & $18$ & $18$ & $18$ & $18$ & $18$ & $9$ \\
\hline
\rule[-0.6em]{0pt}{1.2em} $2$ & $18$ & $16$ & $16$ & $16$ & $16$ & $16$ & $16$ & $16$ & $16$ & $8$ \\
\hline
\rule[-0.6em]{0pt}{1.2em} $3$ & $18$ & $16$ & $14$ & $14$ & $14$ & $14$ & $14$ &$14$  & $14$ & $7$ \\
\hline
\rule[-0.6em]{0pt}{1.2em} $4$ & $18$ & $16$ & $14$ & $12$ & $12$ & $12$ & $12$ & $12$ & $12$ & $6$ \\
\hline
\rule[-0.6em]{0pt}{1.2em} $5$ & $18$ & $16$ & $14$ & $12$ & $10$ & $10$ & $10$ & $10$ & $10$ & $5$ \\
\hline
\rule[-0.6em]{0pt}{1.2em} $6$ & $18$ & $16$ & $14$ & $12$ & $10$ & $8$ & $8$ & $8$ & $8$ & $4$ \\
\hline
\rule[-0.6em]{0pt}{1.2em} $7$ & $18$ & $16$ & $14$ & $12$ & $10$ & $8$ & $6$ & $6$ & $6$ & $3$ \\
\hline
\rule[-0.6em]{0pt}{1.2em} $8$ & $18$ & $16$ & $14$ & $12$ & $10$ & $8$ & $6$ & $4$ & $4$ & $2$ \\
\hline
\rule[-0.6em]{0pt}{1.2em} $9$ & $18$ & $16$ & $14$ & $12$ & $10$ & $8$ & $6$ & $4$ & $2$ & $1$ \\
\hline
\rule[-0.6em]{0pt}{1.2em}$10$ & $9$ & $8$ & $7$ & $6$ & $5$ & $4$ & $3$ & $2$ & $1$ & $0$ \\
\hline
\end{tabular}
\vspace{10pt}
\caption{Table of values,
with  diagonal entries $n_{\triangle,s}$ 
and off-diagonal entries $n_{\square,s,s'}$}
\label{table}
\end{table}

Regarding counts of triangles
and squares, Table \ref{table} collects
the values. By Proposition \ref{monodromy-cone},
these counts are exactly the Gram matrix
$(\lambda_s\cdot \lambda_{s'})_{s,s'\in S}$.
The matrix is non-degenerate, of determinant $-2^8\cdot 3^2$.
Let $X^+\to \Delta^{S}\times \Delta^{10}$
be the universal $d$-semistable deformation
of $X_0$.
There is a $10$-dimensional nilpotent orbit $\Delta^S\times \{0\}$ corresponding 
exactly to those 
fibers of $X^+$ admitting an $H\oplus E_8(2)$-polarization (we have assumed $X_0$
has trivial periods $\varphi_{X_0}=1$).
Let $$X\to \Delta^S\simeq \Delta^S\times \{0\}$$ 
denote the restriction. 
It is a semistable degeneration
of K3 surfaces, over a $10$-parameter base. Then
$(\Delta^*)^S$ maps  to the $10$-dimensional moduli space 
$F_{H\oplus E_8(2)}$ of lattice-polarized K3 surfaces.
The period image contains an analytic open subset near
the Type III cusp of $F_{H\oplus E_8(2)}$ corresponding
to an isotropic vector $\delta\in (H\oplus E_8(2))^\perp$
satisfying $\delta^\perp/\delta\simeq H\oplus E_8(2)$.
The lattice $\bigoplus_{s\in S} \bZ\lambda_s\subset \delta^\perp/\delta$ generated by the monodromy
cone has index $3$,
for example $\lambda_1$
is $3$-divisible by \cite[Cor.~7.33]{engel3}. This
corresponds to the fact that
the determinant of the Gram matrix
is $-3^2\cdot 2^8$ rather than $-2^8$,
as it would be for a basis of
$H\oplus E_8(2)$.

The fibers $X_t$ of the family $X\to \Delta^S$ 
over $(\Delta^*)^S$ are smooth, elliptic
K3 surfaces $X_t\to \bP^1$ with section, 
admitting an involution $\iota_t$ 
equivariant over an 
involution of $\bP^1$, whose quotient 
$X_t/\iota_t =Y_t\to \bP^1$
is a rational elliptic surface with section. See the
$(r,a,\delta) = (10, 8,0)$ entry of \cite[Sec.~2A]{engel4}.
The fibers $X_t$
over $V(u_{10})\setminus \bigcup_{s\neq 10} V(u_s)$ are snc Type II Kulikov surfaces;
more precisely, the union of two isomorphic
rational elliptic surfaces along the 
same elliptic fiber. All other fibers
of $X\to \Delta^S$ are of Type III.
The number of triple points of the snc
Kulikov models over each coordinate hyperplane
$V(u_s)\setminus \bigcup_{t\neq s}V(u_t)$ are the diagonal entries
of Table \ref{table}.
\end{example}

\begin{remark} It is also possible
to construct polysnc, $d$-semistable
varieties $X_0$ which occus as
central fibers of semistable degenerations
of abelian varieties. These are abstractly smoothable by Theorem \ref{smoothable},
or one can explicitly construct
smoothings using toric geometry
and the Mumford construction; 
see \cite[Constrs.~3.21, 3.38]{survey}.
The combinatorial data required to define
such a degeneration of principally polarized
abelian $g$-folds is a collection
of convex $\bQ{\rm PL}$ functions
$b_s\colon \bR^g\to \bR$, 
for $s\in S$, satisfying
the property that $b_s(x+y)-b_s(x)$
is linear for all $y\in \bZ^g$.
The bending loci 
${\rm Bend}(b_s)\subset \bR^g/\bZ^g$
must satisfy certain
tropical smoothness and transversality 
conditions, to ensure that the resulting
degeneration $X\to \Delta^S$
is semistable. The central
fiber $X_0$ is a union of smooth toric
varieties, whose polytopes are the cells
of the polyhedral decomposition
$\bigcup_{s\in S} {\rm Bend}(b_s)\subset \bR^g/\bZ^g$.
See \cite[Def.~4.18, Ex.~4.20]{survey}
for examples of nodal degenerations 
(see Def.~\ref{semistable}),
which can be associated to a 
regular matroid.
\end{remark}

\section{Acknowledgments}

I thank Nathan Chen and Stefan
Schreieder for useful discussions,
and Simon Felten for his comments
and pointing out the
reference \cite{li}.
This research was partially
supported by NSF
CAREER grant DMS-2441240
and a Sloan Research Fellowship.

ChatGPT was used to critically review an 
earlier version of this manuscript. It helped identify a number of errors,
and suggested material for the revised draft (Corollary \ref{universal}, Propositions \ref{reparam-action}, \ref{reparam-quotient}, \ref{bigger}, and Lemma \ref{li}). Mathematical arguments and conclusions were independently verified by the author, who takes full responsibility for the contents of the paper.

\bibliographystyle{plain}
\bibliography{bibliography}

\end{document}